\newtheorem{theorem}{Theorem}[section]
 \newtheorem{lemma}[theorem]{Lemma}
 \newtheorem{proposition}[theorem]{Proposition}
 \theoremstyle{definition}
 \theoremstyle{remark}
 \newtheorem{remark}[theorem]{Remark}
 \newtheorem*{claim}{Claim} 
 \numberwithin{equation}{section}
\let\Section=\section
\def\section{\setcounter{equation}{0}\Section}
\title{Nodal solutions of a NLS equation concentrating on lower dimensional spheres}
\author{Giovany M. Figueiredo\thanks{Supported by PROCAD/CASADINHO 552101/2011-7, CNPq/PQ
301242/2011-9 and CNPq/CSF  200237/2012-8}\\
\noindent Faculdade de Matem\'{atica}\\
\noindent Universidade Federal do Par\'a \\
\noindent 66075-110, Bel\'{e}m - Pa, Brazil.\\
\noindent e-mail: {\tt{giovany@ufpa.br}}\\
\mbox{}\\
and \\
Marcos T. O. Pimenta\thanks{Corresponding author } \thanks{Supported by FAPESP 2014/16136-1 and CNPq 442520/2014-0}\\
\noindent Departamento de Matem\'atica e Computa\c{c}\~ao\\
\noindent Faculdade de Ci\^encias e Tecnologia\\
\noindent Universidade Estadual Paulista - Unesp\\
\noindent 19060-900, Presidente Prudente - SP, Brazil.\\
\noindent e-mail: {\tt{pimenta@fct.unesp.br}}\\
}
\date{}
\begin{document}

\maketitle

\begin{abstract}
In this work we deal with the following nonlinear Schr\"odinger equation
$$
\left\{
\begin{array}{l}-\epsilon^2\Delta u + V(x)u = f(u) \ \ \mbox{in
$\mathbb{R}^N$}\\
u \in H^1(\mathbb{R}^N),
\end{array}
\right.
$$
where $N \geq 3$, $f$ is a subcritical power-type nonlinearity and $V$ is a positive potential satisfying a local condition. We prove the existence and concentration of nodal solutions which concentrate around a $k-$ dimensional sphere of $\mathbb{R}^N$, where $1 \leq k \leq N-1$, as $\epsilon \to 0$. The radius of such sphere is related with the local minimum of a function which takes into account the potential $V$. Variational methods are used together with the penalization technique in order to overcome the lack of compactness.

\end{abstract}

{\scriptsize{\bf 2010 Mathematics Subject Classification:} 35J60,
35J10, 35J20.}

{\scriptsize{\bf Keywords:} variational methods, nodal solutions, concentration on manifolds.}

\section{Introduction}

\hspace{.5cm} In the last decades, motivated by the great interest that this problem catch in quantum mechanics, so many researchers have dedicated their efforts on the study of the Nonlinear Schr\"odinger equation
$$
i\hbar \frac{\partial \psi}{\partial t} = -\frac{\hbar^2}{2m}\Delta \psi + W(x)\psi - |\psi|^{p-1}\psi \quad (t,x) \in \mathbb{R}\times \mathbb{R}^N.
$$ 
Of particular interest are the so called {\it standing wave solutions} which consists in solutions with a particle-like behavior. It is obtained by the {\it Ansatz} $\psi(t,x) = e^{-iEt/\hbar}u(x)$ which associate the NLS equation to its stationary version
\begin{equation}
-\epsilon^2\Delta u + V(x)u = |u|^{p-1}u \quad \mbox{in $\mathbb{R}^N$}
\label{IP1}
\end{equation}
where $\epsilon^2=\hbar^2/2m$ and $V(x) = W(x) - E$.
As far as (\ref{IP1}) is concerned, the behavior of the solutions when $\epsilon \to 0$ has a great physical interest since it describes the transition from quantum to the classical mechanics, being called {\it semiclassical states}. On this specific subject, many authors have worked on {\it spike-layered solutions} which are nontrivial ground-state points of the associated energy functional and tend to concentrate around one or more critical points of the potential $V$. We could cite some quite influent works on this subject, as the pioneering work of Floer and Weinstein \cite{Floer}, which have inspired the works of Rabinowitz \cite{Rabinowitz}, Wang \cite{Wang}, Del Pino and Felmer \cite{DelPino}, which have influenced so many other works by their own in the last three decades.

In the last ten years, solutions which concentrate on higher dimensional sets has been received more and more attention. The first work which seems to show this kind of result is \cite{Montenegro} in which the authors study a NLS equation on a bounded domain with Neumann boundary condition and prove the existence of a sequence of solutions which concentrate on some component of the boundary. One of the first works dealing with solutions concentrating around a sphere is \cite{Ambrosetti1} in which Ambrosetti, Malchiodi and Ni give necessary and sufficient conditions under which (\ref{IP1}) exhibit solutions concentrating around a sphere. The radius of such a sphere is given by a minimum point of a function $\mathcal{M}$, which takes into account the value of the radial potential $V(|x|)$. The role played by $\mathcal{M}$ is in order to balance the potential energy (coming from $V$) and the volume energy which arise from the other terms of the energy functional (see the introduction of \cite{Ambrosetti1} for more details). In fact, sphere concentrating solutions show a rather different behavior when compared with spike-layered ones. To be more specific, in \cite{Ambrosetti1}, the authors prove the existence of sphere concentrating solutions to (\ref{IP1}) even for critical or supercritical exponent $p$. This is in a strike contrast with the fact that, as showed in \cite{Cingolani}, no spike-layered solution exists to (\ref{IP1}) for $p = 2^*-1$. Other significant difference is that the energy of the sphere concentrating solutions tend to zero, in contrast with those of spike-layered solutions, whose energy converge to the mountain-pass level of the energy functional. In these and so many other works (\cite{Ambrosetti2,Ruiz,Daprile, Ianni} for example), Lyapunov-Schmidt reduction methods have been used in order to construct the sphere-concentrating solutions for Schr\"odinger equations, Schr\"odinger-Poisson systems other related problems.

More recently, in \cite{Bonheure1} Bonheure {\it et al} proved the existence of solutions concentrating around a $k-$dimensional sphere of $\mathbb{R}^N$, for all $k \in \{1,...,N-1\}$ to the following equation
\begin{equation}
-\epsilon^2\Delta u + V(x)u = K(x)f(u) \quad \mbox{in
$\mathbb{R}^N$,}
\label{IP2}
\end{equation}
where the potentials $V$ and $K$ satisfy rather generic conditions, allowing $V$ even to vanish on the infinity. To do so they use a modification of the penalization technique, originally presented in \cite{DelPino}, in such a way that compactness is recovered to the modified energy functional. Because of the generality of conditions under with $V$ and $K$ are subjected, in order to prove that the solutions of the modified problem are solutions of the original one, they made a thorough analysis with some barrier functions  which bounds the solutions from above. In \cite{Bonheure2} the authors employ a similar argument in order to show the existence of solutions concentrating on circumferences of $\mathbb{R}^3$, to a Schr\"odinger-Poisson system.

In the {\it spike-layered} solutions setting, the existence of sign-changing (or nodal) solutions was investigated by some authors. In \cite{Soares1} and \cite{Soares2}, Alves and Soares study problem (\ref{IP2}), with $K$ to be a constant, and prove the existence of nodal solutions which concentrate at minima of the potential $V$. In the first work they consider $f$ as a subcritical power-type nonlinearity and in the second, as presenting a critical exponential growth at infinity. In both they employ the penalization technique together with a careful analysis of the profile of the solutions. In \cite{Sato}, Y. Sato have proposed a different kind of penalization in order to show the existence of multi-peak nodal solutions to a Schr\"odinger equation with a vanishing potential.

A question that naturally arises is whether there exist a sequence of nodal solutions to the NLS equation which concentrate around a $k-$dimensional sphere. In this work we give a positive answer to this question. More specifically, we study the existence and concentration of nodal solutions to the following nonlinear Schr\"odinger equation

\begin{equation}
\left\{
\begin{array}{l}-\epsilon^2\Delta u + V(x)u = f(u) \ \ \mbox{in
$\mathbb{R}^N$}\\
u \in H^1(\mathbb{R}^N),
\end{array}
\right.  \label{P}
\end{equation}
where $N \geq 3$, exhibiting a cylindrical symmetry which implies in this sort of concentration. The nonlinearity $f$ is assumed to be a $C^1(\mathbb{R})$ odd function satisfying

\begin{itemize}
\item [$(f_1)$] There exists $\nu > 1$ such that $f(|s|) = o(|s|^\nu)$ as $s \to 0$;
\item [$(f_2)$] There exist $c_ 1, c_2 > 0$ such that $ |f'(s)| \leq c_1 + c_2|s|^{p-1}$ where $1 < p < \frac{N-k+2}{(N-k-2)}$ and $k$ is like in Section \ref{section1.1};
\item [$(f_3)$] There exists $\theta > 2 $ such that $$0 < \theta F(s) \leq f(s)s, \quad \mbox{for $s \neq 0$},$$
where $F(s) = \int_0^s f(t)dt$;
\item [$(f_4)$] $s \mapsto f(s)/s$ is increasing in $s > 0$ and decreasing for $s < 0$.
\end{itemize}

The potential $V$ will be assumed to satisfy a symmetry condition which we explain in the next section.

\subsection{Statement of the main result}
\label{section1.1}

\hspace{.5cm} Let $1 \leq k \leq N-1$ be an integer which determine the dimension of the sphere in which the solutions obtained are going to concentrate. Consider $\mathcal{H}$ a $(N-k-1)-$dimensional linear subspace of $\mathbb{R}^N$ and note that $\mathcal{H}^\perp$ is a $(k+1)-$dimensional subspace. All along the paper we use the notation for $x\in \mathbb{R}^N$ as $x = (x',x'')$ in which $x'\in \mathcal{H}$, $x''\in \mathcal{H}^\perp$ are such that $x = x' + x''$.

From now on, if $h:\mathbb{R}^N \to \mathbb{R}$ is a function, by saying that $h(x',x'') = h(x',|x''|)$ (which rigorously does not make sense), we mean that $h(x',y) = h(x',z)$ for all $y,z \in \mathcal{H}^\perp$ such that $|y| = |z|$. 

The condition in $V$ which is considered is the following:

\begin{itemize}
\item [$(V_1)$] There exists $V_0 > 0$ such that $V_0 \leq V(x)$ and, for all $x \in \mathbb{R}^N$, $V(x) = V(x',x'') = V(x',|x''|)$.
\end{itemize}

Unlike spike layered solutions, whose concentration occurs around minimum points of $V$, the solutions we are going to study concentrate around minimum points of an auxiliary potential. To see how we define it, let us consider the limit problem
\begin{equation}
-\Delta u + a u= f(u) \quad \mbox{in $\mathbb{R}^{N-k}$}.
\label{PL}
\end{equation}
It is well known (see \cite{Rabinowitz} for instance) that there exists a ground state solution $w\in H^1(\mathbb{R}^{N-k})$ of $(\ref{PL})$ which minimizes the energy functional
$$I_{a}(u) = \frac{1}{2}\int_{\mathbb{R}^{N-k}}\left(|\nabla u|^2 + au^2 \right)dx - \int_{\mathbb{R}^{N-k}}F(u)dx,$$
in the corresponding Nehari manifold given by
$$\mathcal{N}_{a} = \{u \in H^1(\mathbb{R}^{N-k})\backslash\{0\}; I_{a}'(u)u = 0\}.$$

We define the {\it ground-energy function} $\mathcal{E}: \mathbb{R}^+ \to \mathbb{R}^+$, by
$$\mathcal{E}(a) = \inf_{\mathcal{N}_{a}}I_{a}.$$
Finally, we define the {\it auxiliary potential} $\mathcal{M}:\mathbb{R}^N \to (0,+\infty]$ by
$$\mathcal{M}(x) = |x''|^k \mathcal{E}(V(x))$$ where $x = (x',x'')$, $x'\in \mathcal{H}$ and $x''\in \mathcal{H}^\perp$.

On the auxiliary potential $\mathcal{M}$ we impose the following condition
\begin{itemize}
\item [$(\mathcal{M}_1)$] There exists an open bounded set $\Omega \subset \mathbb{R}^N$ such that, if $(x',x'') \in \Omega$ then $(x',y'') \in \Omega$ for all $y''\in \mathcal{H}^\perp$, $|x''|=|y''|$. Moreover

$$0 < \mathcal{M}_0 : = \inf_{x \in \Omega} \mathcal{M}(x) < \inf_{x\in \partial \Omega} \mathcal{M}(x).$$
\end{itemize}

Now we can finally state our main result.

\begin{theorem}
Let $f$ satisfying $(f_1) - (f_4)$ and $V$ such that $(V_1)$ and $(\mathcal{M}_1)$ hold. Then for each sequence $\epsilon_n \to 0$, there exists a subsequence still denoted by $(\epsilon_n)$ such that (\ref{P}) (with $\epsilon = \epsilon_n$) has a nodal bound state $u_n$ such that, $u(x',x'') = u(x',|x''|)$ and, if $\epsilon_nP^1_n$ and $\epsilon_nP^2_n$ are respectively a minimum and a maximum point of $u_n$, then $\epsilon_nP^i_n \in \Omega$, $i = 1,2$ for $n$ sufficiently large, 
\begin{equation}
\epsilon_nP_n^i \to x_0, \quad \mbox{as $n \to \infty$}
\label{Pconcentration}
\end{equation}
where $\mathcal{M}(x_0) = \mathcal{M}_0$ and
$$|u_n(x)| \leq C\left(e^{-\frac{\beta}{\epsilon_n} d_k(x,\epsilon_n P_n^1)} + e^{-\frac{\beta}{\epsilon_n} d_k(x,\epsilon_nP_n^2)}\right) \quad x\in \mathbb{R}^N,$$
where $C, \beta > 0$ and $d_k$ is the distance defined in (\ref{distancedk}).
\label{theorem1.1}
\end{theorem}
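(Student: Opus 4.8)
The plan is to construct nodal solutions via the penalization technique of Del Pino--Felmer, adapted to the cylindrically symmetric setting so that the resulting solutions respect the symmetry $u(x',x'')=u(x',|x''|)$ and hence concentrate on a $k$-dimensional sphere rather than at a point. First I would rescale the problem by setting $v(x)=u(\epsilon x)$, turning \eqref{P} into $-\Delta v + V(\epsilon x)v = f(v)$, and then modify the nonlinearity $f$ outside the set $\Omega/\epsilon$ (the Del Pino--Felmer truncation) so that the Ambrosetti--Rabinowitz condition fails for the penalized nonlinearity far from $\Omega$, thereby forcing compactness of Palais--Smale sequences of the modified functional $J_\epsilon$ on the subspace $H_{\mathrm{cyl}}^1$ of cylindrically symmetric functions. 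Working within $H_{\mathrm{cyl}}^1$ is essential: the symmetry of $V$ from $(V_1)$ guarantees that the functional is invariant under the relevant rotation group, and the principle of symmetric criticality then ensures that critical points of the restricted functional are genuine critical points of the full functional.

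Next I would produce a nodal critical point of the penalized functional. The natural device is to minimize $J_\epsilon$ on the nodal Nehari-type set
\begin{equation}
\mathcal{M}_\epsilon=\{v\in H_{\mathrm{cyl}}^1 : v^+\neq 0,\ v^-\neq 0,\ J_\epsilon'(v)v^+=0,\ J_\epsilon'(v)v^-=0\},
\label{nodalset}
\end{equation}
where $v^\pm$ denote the positive and negative parts. Hypotheses $(f_3)$ and $(f_4)$ are exactly what is needed to guarantee that $\mathcal{M}_\epsilon$ is nonempty, that the infimum is attained, and that every element of $\mathcal{M}_\epsilon$ decomposes uniquely along the positive and negative cones; the monotonicity in $(f_4)$ gives the fibering-map structure that makes the projection onto $\mathcal{M}_\epsilon$ well defined. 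Minimizing over this set yields, for each $\epsilon$, a least-energy nodal solution $v_\epsilon$ of the penalized equation, and its minimum and maximum points $P_\epsilon^1,P_\epsilon^2$ locate the two nodal bumps.

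The heart of the argument is then the asymptotic analysis as $\epsilon\to 0$, carried out by comparing the nodal energy level with the auxiliary potential $\mathcal{M}$. Using the ground-energy function $\mathcal{E}$ and the $|x''|^k$ volume factor encoded in $\mathcal{M}(x)=|x''|^k\mathcal{E}(V(x))$, I would show that the minimal nodal energy is asymptotically governed by $\mathcal{M}_0=\inf_\Omega\mathcal{M}$, essentially because each bump, after rescaling and translating to its concentration point, converges to a ground state $w$ of the limit problem \eqref{PL} with $a=V(x_0)$, and the $(N-k)$-dimensional energy $\mathcal{E}(V(x_0))$ gets multiplied by the measure $\sim|x_0''|^k$ of the sphere it is smeared over. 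A $\Gamma$-convergence-type or direct energy-estimate comparison then forces the concentration points $\epsilon P_\epsilon^i$ into $\Omega$ and, via $(\mathcal{M}_1)$, toward the interior minimizer $x_0$, giving \eqref{Pconcentration}. Finally, elliptic regularity together with a comparison-function (barrier) argument using the distance $d_k$ from \eqref{distancedk} produces the exponential decay estimate, and the same decay shows that $v_\epsilon$ stays small outside $\Omega/\epsilon$, so the penalization is inactive and $v_\epsilon$ solves the original equation \eqref{P}.

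The main obstacle I anticipate is controlling the nodal structure uniformly through the limit: unlike the positive-solution case, one must prevent the two bumps from merging or from escaping to different spheres, and must rule out that the penalized nodal solution degenerates (loses a sign) as $\epsilon\to 0$. This requires sharp two-sided energy estimates showing that a genuinely sign-changing configuration costs asymptotically $2\mathcal{M}_0$, strictly more than a one-signed one, together with the decay bound to confirm that the penalization term never activates on the support of either bump. Establishing that the cylindrical symmetry is compatible with the nodal Nehari minimization—so that the symmetric minimizer is itself nodal and not merely a symmetrization of a one-signed state—is the delicate technical point on which the whole construction rests.
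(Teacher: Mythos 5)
Your proposal follows the same overall architecture as the paper --- penalize outside $\Omega$, minimize the penalized functional over a nodal Nehari set in the cylindrically symmetric space, obtain the energy asymptotics $\epsilon_n^k d_n \to 2\omega_k\mathcal{M}_0$ by rescaling each bump to a ground state of the limit problem in $\mathbb{R}^{N-k}$, and conclude with a barrier argument in the metric $d_k$ --- but it diverges from the paper at the one point where the paper is not standard: the choice of penalization, and hence the compactness mechanism for existence. You use the classical Del Pino--Felmer truncation and invoke Palais--Smale compactness for the modified functional. The paper instead uses Sato's modification: an \emph{odd} $C^1$ function $\tilde f_\epsilon$ with $\tilde f_\epsilon(s)=f(s)$ for $|s|\leq \frac{1}{2}\epsilon^{\tau/(\nu-1)}$, $\tilde f_\epsilon(s)=\epsilon^\tau s$ for $|s|\geq \epsilon^{\tau/(\nu-1)}$, and $0\leq \tilde f_\epsilon'\leq 2\epsilon^\tau$. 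The point of this choice --- what the paper calls the ``real essence'' of the modification --- is that the restriction of $I_\epsilon$ to $\mathbb{R}^N\backslash\Omega_\epsilon$ becomes strictly convex for small $\epsilon$, hence weakly lower semicontinuous, so the infimum $d_\epsilon$ over $\mathcal{N}_\epsilon^\pm$ is attained by direct minimization and no Palais--Smale condition is ever proved. Your route is viable here because $V\geq V_0>0$ keeps the Del Pino--Felmer truncation available (it is essentially the Alves--Soares route of \cite{Soares1}, transplanted to the cylindrical setting); what it buys is familiarity, while what Sato's penalization buys is that the exterior part of the minimization is handled by convexity alone, which is precisely what lets the paper push the weak limit of a minimizing sequence back into $\mathcal{N}_\epsilon^\pm$ without any compactness outside $\Omega_\epsilon$.

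Two steps in your plan need to be filled in for it to close. First, whatever truncation you use must be taken odd: oddness of $g_\epsilon(x,\cdot)$ is needed both for the monotonicity $(g_4)$ on the negative cone (so the fibering maps control $v^-$ as well as $v^+$) and, crucially, in the upper energy estimate, where the paper bounds $d_n \leq I_n(t_{1_n}w_{n,z_1}) + I_n(t_{2_n}w_{n,z_2})$ for the sign-changing test function $t_{1_n}w_{n,z_1} - t_{2_n}w_{n,z_2}$ using that $I_n$ is even; a one-sided truncation destroys this. Second, minimizing over the nodal Nehari set produces a minimizer, not yet a critical point: since $u\mapsto u^\pm$ is not differentiable, $\mathcal{N}_\epsilon^\pm$ is not a manifold and Lagrange multipliers are unavailable. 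The paper spends the second half of the proof of Theorem \ref{theorem2} on exactly this, combining the quantitative deformation lemma with a Brouwer degree argument for the map $(t,s)\mapsto \left(I_\epsilon'(tv_\epsilon^+)v_\epsilon^+,\, I_\epsilon'(sv_\epsilon^-)v_\epsilon^-\right)$; your ``fibering-map structure'' remark sets up, but does not substitute for, this argument. On the other hand, your explicit appeal to the principle of symmetric criticality is a point where you are more careful than the paper, and your anticipated obstacle is slightly misplaced: no symmetrization occurs (sign change is imposed directly by the constraint $v^\pm\neq 0$), and the two bumps may well concentrate on different minimizing spheres in the limit --- what must be ruled out is only their merging at scale $\epsilon^{-1}$, which is the content of Lemma \ref{lemma5.3} and feeds the lower bound $\liminf_{n} \epsilon_n^k d_n \geq 2\omega_k\mathcal{M}_0$ matching your claimed cost of $2\mathcal{M}_0$ for a genuinely nodal configuration.
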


The arguments in proving the existence of solutions were strongly influenced by the works of Alves and Souto \cite{AlvesSouto}, in which they prove the existence of nodal solutions to a Schr\"odinger-Poisson system. In the concentration, we follow closely the arguments in \cite{Soares1} and \cite{Bonheure1,Bonheure2}.

After our work has been finished we found the very recent paper \cite{Fei} in which the author uses a similar argumentation in order to prove the existence of a sequence of nodal multi-peak solutions which concentrate around the minimum points of a modified potential, associated to a vanishing potential. The existence arguments in both works rely on a minimization of the penalized energy functional on the nodal Nehari set and the concentration arguments follow the same general lines. Nevertheless, it is worth pointing out that in our work, since we get sphere concentrating solutions, several technicals difficulties arise. Moreover, in our work proving that the solution of the modified problem is in fact a solution of the original one involves different comparison functions, since our penalization is slightly different.

In Section 2 we present the penalization scheme and the variational framework. In Section 3 we prove the existence of the nodal solutions of the modified problem. In Section 4 we exhibit the concentration arguments in order to prove that the solutions of the modified problem concentrates around a k-dimensional sphere and in the last section we complete the prove of the Theorem \ref{theorem1.1} by showing that the solutions of the modified problem satisfy the original one.

\section{The penalized nonlinearity and the variational framework}

\hspace{.5cm}
The penalization we are going to apply is a variation of the classical method of Del Pino and Felmer in \cite{DelPino}, developed by Y. Sato in \cite{Sato} in order to allow its use in finding nodal solutions. Fixing $2 < \tau < \theta$, let $r_\epsilon > 0$ such that 
$$\frac{f(r_\epsilon)}{r_\epsilon} = \epsilon^\tau \quad \mbox{and} \quad \frac{f(-r_\epsilon)}{-r_\epsilon} = \epsilon^\tau.$$

Since $r_\epsilon \to 0$ as $\epsilon \to 0$, $(f_1)$ implies that
$$\epsilon^\tau = \frac{f(|r_\epsilon|)}{|r_\epsilon|} \leq |r_\epsilon|^{\nu-1}.$$
Thus $\epsilon^\frac{\tau}{\nu-1} \leq |r_\epsilon|$ and we can choose an odd function $\tilde{f}_\epsilon \in C^1(\mathbb{R})$ satisfying
$$\tilde{f}_{\epsilon}(s) = \left\{
\begin{array}{rl}
f(s) & \mbox{if \,$|s| \leq \frac{1}{2}\epsilon^\frac{\tau}{\nu-1}$,}\\
\epsilon^\tau s & \mbox{if \,$|s| \geq \epsilon^\frac{\tau}{\nu-1}$,}
\end{array}\right.$$
\begin{equation}
|\tilde{f}_{\epsilon}(s)| \leq \epsilon^\tau|s| \quad \mbox{for all $s \in \mathbb{R}$,}
\label{penalization1}
\end{equation}
\begin{equation}
0 \leq \tilde{f}_{\epsilon}'(s) \leq 2\epsilon^\tau \quad \mbox{for all $s \in \mathbb{R}$}
\label{penalization2}
\end{equation}
and
\begin{equation}
\mbox{ $s \mapsto \tilde{f}_\epsilon(s)/s$ is increasing for $s > 0$ and decreasing for $s < 0$.}
\label{penalization3}
\end{equation}

Let us define $g_\epsilon(x,s): = \chi_\Omega(x)f(s) + (1 - \chi_\Omega(x))\tilde{f}_{\epsilon}(s)$,
where $\chi_\Omega$ is the characteristic function of $\Omega$. Note that by $(f_1) - (f_4)$, $g$ is a Charath\'eodory function, such that $g_\epsilon(x',x'',s) = g_\epsilon(x',|x''|,s)$ satisfying 
\begin{itemize}
\item [$(g_1)$] $g_\epsilon(x,s) = o(|s|^\nu)$, as $s \to 0$, uniformly in compact sets of $\mathbb{R}^N$.
\item [$(g_2)$] There exist $c_ 1, c_2 > 0$ such that $ |g_\epsilon(x,s)| \leq c_1|s| + c_2|s|^p$ where $1 < p < \frac{N+2}{N-2}$;
\item [$(g_3)$] There exists $\theta > 2$ such that:
\begin{itemize}
\item [$i)$] $0 < \theta G_\epsilon(x,s) \leq g_\epsilon(x,s)s$, for $x \in \Omega$ and $s \neq 0$,
\item [$ii)$] $0 < 2 G_\epsilon(x,s) \leq g_\epsilon(x,s)s,$ for $x\in \mathbb{R}^N\backslash\Omega$ and $s \neq 0$,
\end{itemize}
where $G_\epsilon(x,s) = \int_0^s g_\epsilon(x,t)dt$.
\item[$(g_4)$] $s \mapsto \frac{g_\epsilon(x,s)}{s}$ is a nondecreasing function for $s > 0$ and nonincreasing for $s < 0$, for all $x\in \mathbb{R}^N$.
\end{itemize}

In a first moment, the concern will be with the penalized problem
\begin{equation}
-\epsilon^2\Delta u + V(x)u = g_\epsilon(x,u) \ \ \mbox{in
$\mathbb{R}^N$}. \label{P1}
\end{equation}
Taking $v_\epsilon(x) = u_\epsilon(\epsilon x)$, we relate each solution $u_\epsilon$ of (\ref{P1}) with a solution $v_\epsilon$ of
\begin{equation}
-\Delta v + V(\epsilon x)v = g_\epsilon(\epsilon x,u) \ \ \mbox{in
$\mathbb{R}^N$}. \label{P2}
\end{equation}

In order to obtain solutions of $(\ref{P2})$ with a partial symmetry, let us consider the following subspace of $H^1(\mathbb{R}^N)$,
$$\tilde{H} := \left\{v \in H^1(\mathbb{R}^N); \int (|\nabla v|^2 + V(\epsilon x)v^2) < +\infty \, \,  \mbox{and} \, \,  v(x',x'') = v(x',|x''|)\right\}$$
which is a Hilbert space when endowed with the inner product 
$$\langle u,v \rangle_\epsilon = \left( \int (\nabla u \nabla v + V(\epsilon x)uv) \right),$$
which gives rise to the following norm
$$\|v\|_\epsilon= \left( \int (|\nabla v|^2 + V(\epsilon x)v^2 \right)^\frac{1}{2}.$$

Since the approach is variational, let us consider the energy functional $I_\epsilon: \tilde{H} \to \mathbb{R}$, whose Euler-Lagrange equation is (\ref{P2}), given by
$$I_\epsilon(v) = \frac{1}{2}\int( |\nabla v|^2 + V(\epsilon x)v^2) - \int G_\epsilon(\epsilon x,v).$$
By standard arguments, one can prove that $I_\epsilon \in C^2(\tilde{H}, \mathbb{R})$.

\begin{remark}
In this section and through the rest of the paper, we omit the $dx$ in all the integrals and, when the domain over which the integral is calculated is $\mathbb{R}^N$ we write $\int$ rather than $\int_{\mathbb{R}^N}$.
\end{remark}
\section{Existence results}

\hspace{.5cm} Let us consider the Nehari manifold associated to (\ref{P2}), which is well defined by $(g_4)$ and given by
$$\mathcal{N}_\epsilon = \{v \in \tilde{H} \backslash \{0\}; I_\epsilon'(v)v = 0\}.$$

Since we are looking for nodal solutions, let us consider the so called nodal Nehari set
$$\mathcal{N}_\epsilon^\pm = \{v \in \tilde{H}; v^\pm \neq 0 \ \ \mbox{and} \ \ I_\epsilon'(v)v^\pm = 0\}.$$
Although $\mathcal{N}_\epsilon^\pm$ is not a manifold since $u \mapsto u^+$ in $H^1(\mathbb{R}^N)$ lacks differentiability, it is a set which contain all nodal solutions of (\ref{P2}).

Next result try to infer informations of $I_\epsilon$ with respect to $\mathcal{N}_\epsilon^\pm$ in the same way that one is used to do with $\mathcal{N}_\epsilon$.

\begin{lemma}
Let $v \in \tilde{H}$ such that $v^\pm \neq 0$. Then there exist $t,s > 0$ such that $t v^+ + s v^- \in \mathcal{N}_\epsilon^\pm$.
\label{lemma1}
\end{lemma}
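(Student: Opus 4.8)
The key structural observation is that $v^+$ and $v^-$ have essentially disjoint supports, so that $\nabla v^+\cdot\nabla v^-=0$ and $v^+v^-=0$ almost everywhere. Writing $w=tv^++sv^-$ with $t,s>0$, one then has $w^+=tv^+$, $w^-=sv^-$, and all cross terms drop out. My plan is therefore to compute the two quantities defining $\mathcal{N}_\epsilon^\pm$ explicitly and observe that they \emph{decouple}:
$$I_\epsilon'(w)w^+ = t^2\|v^+\|_\epsilon^2 - \int g_\epsilon(\epsilon x,tv^+)\,tv^+, \qquad I_\epsilon'(w)w^- = s^2\|v^-\|_\epsilon^2 - \int g_\epsilon(\epsilon x,sv^-)\,sv^-,$$
the first depending only on $t$ and the second only on $s$. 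Thus the two-variable problem of landing on $\mathcal{N}_\epsilon^\pm$ reduces to two independent one-dimensional problems.

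I would then set $\psi_+(t) = t^2\|v^+\|_\epsilon^2 - \int g_\epsilon(\epsilon x, tv^+)\,tv^+$ and seek a zero $t>0$; the argument for $\psi_-$ is identical. Near $t=0$, hypothesis $(g_1)$ gives $\int g_\epsilon(\epsilon x, tv^+)\,tv^+ = o(t^2)$, so $\psi_+(t) > 0$ for $t$ small. For large $t$ I would invoke $(g_3)$, which forces superlinear growth of $g_\epsilon(\epsilon x,\cdot)$ on $\Omega$ (the Ambrosetti--Rabinowitz inequality yields $G_\epsilon(x,s) \geq c|s|^\theta$ with $\theta > 2$), to conclude that $t^{-2}\int g_\epsilon(\epsilon x, tv^+)\,tv^+ \to +\infty$ and hence $\psi_+(t) < 0$ eventually. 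Continuity and the intermediate value theorem then produce the desired zero. Uniqueness, while not required by the statement, is natural to record and follows from $(g_4)$: since $s\mapsto g_\epsilon(x,s)/s$ is nondecreasing on $(0,\infty)$, the map $t\mapsto \int \frac{g_\epsilon(\epsilon x, tv^+)}{tv^+}(v^+)^2$ is nondecreasing, so $\psi_+(t)/t^2$ is strictly decreasing with at most one zero.

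The main obstacle is precisely the superlinear domination for large $t$. Because the penalized nonlinearity $\tilde f_\epsilon$ is only linear, with slope $\epsilon^\tau$, outside $\Omega$ (estimate $(\ref{penalization1})$), the integral $\int g_\epsilon(\epsilon x, tv^+)\,tv^+$ grows superquadratically only through the part of the support of $v^+$ lying in the rescaled set $\{x : \epsilon x \in \Omega\}$; over its complement one only obtains $\int g_\epsilon(\epsilon x,tv^+)\,tv^+ \leq \epsilon^\tau t^2\int (v^+)^2 \leq (\epsilon^\tau/V_0)\,t^2\|v^+\|_\epsilon^2$, which cannot by itself provide a sign change. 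The delicate point is thus to split the integral into its contributions from inside and outside $\Omega$ and to confirm that the superlinear term dominates. I expect this bookkeeping to be where the real work lies, since the nonemptiness of $\mathcal{N}_\epsilon^\pm$, and hence the usefulness of this projection throughout the existence argument, hinges on it.
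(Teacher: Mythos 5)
Your reduction to two decoupled one-dimensional problems is correct and is exactly how the paper finishes: disjointness of the supports gives $I_\epsilon'(tv^++sv^-)(tv^+)=I_\epsilon'(tv^+)(tv^+)$, which is the paper's closing identity. The small-$t$ positivity also goes through, with one correction: $\int g_\epsilon(\epsilon x,tv^+)tv^+$ is \emph{not} $o(t^2)$, since outside $\Omega_\epsilon$ the integrand can be as large as $\epsilon^\tau t^2 (v^+)^2$; this term must be absorbed using $\epsilon^\tau<V_0$, with $(g_1)$--$(g_2)$ applied only on $\Omega_\epsilon$. The genuine gap is the step you defer as ``bookkeeping'': the sign change of $\psi_+$ for large $t$. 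This is not bookkeeping, because under the lemma's stated hypotheses it is false. If $|\mathrm{supp}(v^+)\cap\Omega_\epsilon|=0$, then $g_\epsilon(\epsilon x,\cdot)=\tilde{f}_\epsilon$ on all of $\mathrm{supp}(v^+)$, so by (\ref{penalization1}) and $V\geq V_0$,
$$\psi_+(t)\ \geq\ t^2\|v^+\|_\epsilon^2-\epsilon^\tau t^2\int (v^+)^2\ \geq\ \left(1-\frac{\epsilon^\tau}{V_0}\right)t^2\|v^+\|_\epsilon^2\ >\ 0 \quad \mbox{for all $t>0$,}$$
in the regime $\epsilon^\tau<V_0$ relevant to the paper. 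Thus no zero of $\psi_+$ exists: any sign-changing $v\in\tilde{H}$ supported away from the bounded set $\Omega_\epsilon$ has $v^\pm\neq 0$ but admits no projection onto $\mathcal{N}_\epsilon^\pm$. The superlinear domination you want genuinely requires the extra hypothesis $|\mathrm{supp}(v^\pm)\cap\Omega_\epsilon|>0$; with it, the Ambrosetti--Rabinowitz bound $F(s)\geq c|s|^\theta$ for $|s|\geq s_0$, valid where $g_\epsilon=f$, does force $\psi_\pm(t)\to-\infty$ and the intermediate value theorem closes the argument.

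For what it is worth, your suspicion lands precisely on the weak point of the paper's own proof, which takes a slightly different route: it first claims $I_\epsilon(tv)\to-\infty$ as $t\to\infty$ for \emph{every} $v\neq 0$ by estimating on the set $\Gamma=(\mathrm{supp}\,v\cap\Omega_\epsilon)\cup\{|v|\leq\frac{1}{2}\epsilon^{\tau/(\nu-1)}\}$, then takes the maximum of the fibering map $t\mapsto I_\epsilon(tv^+)$, and only afterwards decouples as you do. But replacing $G_\epsilon(\epsilon x,tv)$ by $F(tv)$ on all of $\Gamma$ is illegitimate on $\Gamma\setminus\Omega_\epsilon$: there $\tilde{f}_\epsilon(s)=f(s)$ only for $|s|\leq\frac{1}{2}\epsilon^{\tau/(\nu-1)}$, a condition that $tv(x)$ violates for large $t$ wherever $v(x)\neq 0$, and on that region $\tilde{F}_\epsilon(tv)$ grows only like $\frac{\epsilon^\tau}{2}t^2v^2$. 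In the paper the lemma is in fact only applied to functions (the weak limit $w_\epsilon$ of the minimizing sequence) whose positive and negative parts are nontrivial on $\Omega_\epsilon$, which is exactly what the Sato-type compactness argument furnishes; the honest repair, both of your proposal and of the text, is to state and verify that hypothesis where the lemma is used.
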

\begin{proof}
First of all let us prove that, for all $v \in \tilde{H}\backslash\{0\}$, there exists $t > 0$ such that $tv \in \mathcal{N}_\epsilon$. Indeed, if $\Gamma = (supp v \cap \Omega_\epsilon) \cup \{x \in \mathbb{R}^N; \, |v(x)|\leq \frac{1}{2}\epsilon^\frac{\tau}{\nu - 1}\}$, note that $|\Gamma| > 0$ (since $v \in H^1(\mathbb{R}^N)$) and then
\begin{eqnarray*}
I_\epsilon(tv)& = & \frac{t^2}{2}\|v\|^2_\epsilon - \int_\Gamma F(tv) - \int_{\mathbb{R}^n\backslash\Gamma} G(\epsilon x,v)\\
& \leq & \frac{t^2}{2}\|v\|^2_\epsilon - t^\theta \int_\Gamma |v|^\theta - \frac{t^\tau}{2}\int_{\mathbb{R}^n\backslash\Gamma} |v|\\
& \to & -\infty,
\end{eqnarray*}
as $t \to \infty$. Then, if $v \in \tilde{H}$ is such that $v^\pm \neq 0$, there exist $t,s > 0$ such that
$$I'_\epsilon(tv^+)tv^+ = 0 \quad \mbox{and} \quad I'_\epsilon(sv^-)sv^- = 0.$$
Then, it is clear that 
$$I'_\epsilon(tv^+ + sv^-)(tv^+ + sv^-) = I'_\epsilon(tv^+)tv^+ + I'_\epsilon(sv^-)sv^- = 0.$$
\end{proof}

For a fixed $v \in \tilde{H}$, let us consider $\psi_v:[0, +\infty)\times [0,+\infty) \to \mathbb{R}$ given by
$$\psi_v(t,s) = I_\epsilon(t v^+ + s v^-),$$
and note that by the smoothness of $g$, $\psi_v \in C^2(\mathbb{R}^2, \mathbb{R})$.

\begin{lemma}
Let $v \in \mathcal{N}_\epsilon^\pm$, then $(t,s) = (1,1)$ is a strict global maximum point of $\psi_v$.
\label{lemma2}
\end{lemma}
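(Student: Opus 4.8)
The plan is to exploit the disjoint supports of $v^+$ and $v^-$ to decouple $\psi_v$ into a sum of two one-variable functions, and then reduce the whole statement to a scalar computation governed by the monotonicity $(g_4)$.

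First I would record that $v^+v^- = 0$ and $\nabla v^+\cdot\nabla v^- = 0$ a.e., so that $\|tv^+ + sv^-\|_\epsilon^2 = t^2\|v^+\|_\epsilon^2 + s^2\|v^-\|_\epsilon^2$ and $\int G_\epsilon(\epsilon x, tv^+ + sv^-) = \int G_\epsilon(\epsilon x, tv^+) + \int G_\epsilon(\epsilon x, sv^-)$. Hence $\psi_v(t,s) = h_+(t) + h_-(s)$ with $h_+(t):=I_\epsilon(tv^+)$ and $h_-(s):=I_\epsilon(sv^-)$. Since the variables separate, it suffices to prove that $t=1$ is a strict global maximum of $h_+$ on $[0,+\infty)$ and $s=1$ of $h_-$; I will only treat $h_+$, the other being identical.

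Next I would compute $h_+'(t) = t\|v^+\|_\epsilon^2 - \int g_\epsilon(\epsilon x, tv^+)v^+$. Evaluating at $t=1$ and using that on $\{v>0\}$ one has $v=v^+$ while $v^+\equiv 0$ on $\{v\le 0\}$, I get $h_+'(1) = \|v^+\|_\epsilon^2 - \int g_\epsilon(\epsilon x, v^+)v^+ = I_\epsilon'(v)v^+ = 0$, the last equality because $v\in\mathcal{N}_\epsilon^\pm$. Substituting $\|v^+\|_\epsilon^2 = \int g_\epsilon(\epsilon x, v^+)v^+$ into $h_+(t)-h_+(1)$ produces the pointwise integrand
$$\xi_x(t) = \frac{t^2-1}{2}\,g_\epsilon(\epsilon x, v^+)v^+ - \big(G_\epsilon(\epsilon x, tv^+) - G_\epsilon(\epsilon x, v^+)\big),$$
with $h_+(t)-h_+(1) = \int_{\{v^+>0\}}\xi_x(t)$. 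A direct differentiation gives $\xi_x'(t) = v^+\big(t\,g_\epsilon(\epsilon x, v^+) - g_\epsilon(\epsilon x, tv^+)\big)$, whose sign is controlled by comparing $g_\epsilon(\epsilon x, tv^+)/(tv^+)$ with $g_\epsilon(\epsilon x, v^+)/v^+$; by $(g_4)$ this forces $\xi_x'(t)\ge 0$ for $t<1$ and $\xi_x'(t)\le 0$ for $t>1$, hence $\xi_x(t)\le \xi_x(1)=0$ for every $t\ge 0$. Integrating gives $h_+(t)\le h_+(1)$ for all $t\ge 0$ directly, so $t=1$ is a global maximum (consistently, $h_+(t)\to-\infty$ as $t\to\infty$, as in Lemma \ref{lemma1}).

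The main obstacle is the strict inequality, because the penalized nonlinearity is linear for large argument ($\tilde f_\epsilon(s)/s\equiv\epsilon^\tau$ once $|s|\ge\epsilon^{\tau/(\nu-1)}$), so $(g_4)$ is not strict there and $\xi_x(t)$ may vanish identically on parts of the support. To recover strictness I would localize on the set where $g_\epsilon$ genuinely coincides with $f$ and $(f_4)$ is strict, namely $A:=\{0<v^+<\tfrac12\epsilon^{\tau/(\nu-1)}\}$, on which $\tilde f_\epsilon=f$ (and likewise $g_\epsilon=f$ inside $\Omega_\epsilon$); there $\xi_x(t)<0$ strictly for $t\ne 1$. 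It then remains to check $|A|>0$: since $v^+\in H^1(\mathbb{R}^N)\setminus\{0\}$, the set $\{0<v^+<\delta\}$ has positive measure for all small $\delta>0$ (otherwise $\min(v^+,\delta)$ would be a nonzero multiple of a characteristic function of a finite-measure set lying in $H^1$, which is impossible). With $|A|>0$ the integral inequality becomes strict, giving $h_+(t)<h_+(1)$ for $t\ne 1$; the same holds for $h_-$, and summing yields that $(1,1)$ is the strict global maximum of $\psi_v$.
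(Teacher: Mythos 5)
Your proof is correct, and it takes a genuinely different route from the paper's. The paper argues indirectly: it first notes $\psi_v(t,s)\to-\infty$ as $|(t,s)|\to\infty$ (by $(g_3)$), then asserts --- only via ``standard calculations'' about $t\mapsto I_\epsilon(tv^+)$ and $s\mapsto I_\epsilon(sv^-)$ --- that $(1,1)$ is the \emph{unique} critical point of $\psi_v$, invokes Lemma \ref{lemma4} (a forward reference) to get $\psi_v(1,1)\geq\rho>0$, and finally shows that $(1,1)$ is a strict local maximum by computing the diagonal Hessian $D^2\psi_v(1,1)$ and checking that each entry $I_\epsilon''(v^\pm)(v^\pm,v^\pm)=\int\left(g_\epsilon(\epsilon x,v^\pm)v^\pm-g_\epsilon'(\epsilon x,v^\pm)(v^\pm)^2\right)$ is strictly negative; the strictness there comes, exactly as in your argument, from $(f_4)$ on a set of positive measure where the penalization is inactive. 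You instead decouple $\psi_v(t,s)=h_+(t)+h_-(s)$ explicitly and prove the global inequality $h_\pm(t)<h_\pm(1)$ for $t\neq1$ in a single pass: you substitute the Nehari identity $\|v^+\|_\epsilon^2=\int g_\epsilon(\epsilon x,v^+)v^+$ into $h_+(t)-h_+(1)$ and control the sign of the pointwise integrand $\xi_x(t)$ through $(g_4)$, recovering strictness on $A=\{0<v^+<\frac12\epsilon^{\tau/(\nu-1)}\}$, whose positive measure you justify correctly (a characteristic function of a set of positive finite measure does not belong to $H^1$; the paper merely asserts the analogous measure-positivity). Your route is more elementary and self-contained: it bypasses both the unproved uniqueness-of-critical-point step and the forward reference, and needs no second derivatives; the paper's Hessian computation yields in addition nondegeneracy of the maximum, but nothing later in the paper uses more than the strict maximality you establish (e.g. (\ref{eq10}) in the proof of Theorem \ref{theorem2}). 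One detail worth writing out in your strictness step: for $x\in A\setminus\Omega_\epsilon$ and $t>1$, the point $tv^+(x)$ may exit the region where $\tilde{f}_\epsilon=f$, so the strict inequality $g_\epsilon(\epsilon x,tv^+)/(tv^+)>g_\epsilon(\epsilon x,v^+)/v^+$ is not a direct application of $(f_4)$; it follows by chaining $\tilde{f}_\epsilon(tv^+)/(tv^+)\geq\tilde{f}_\epsilon(s_0)/s_0=f(s_0)/s_0>f(v^+)/v^+$ with $s_0=\frac12\epsilon^{\tau/(\nu-1)}$, using (\ref{penalization3}) for the first inequality and the strict monotonicity in $(f_4)$ for the last.
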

\begin{proof}
First of all let us note that if $v \in \mathcal{N}_\epsilon^\pm$, by $(g_3)$
$$\lim_{|(t,s)| \to \infty}\psi_v(t,s) = -\infty.$$
Then there exists $R>0$ such that 
\begin{equation}
\psi_v(t,s) < 0, \quad \mbox{if $|(t,s)| \geq R.$}
\label{eq1}
\end{equation}
Since $$\nabla\psi_v(t,s) = \left(I_\epsilon'(tv^+)v^+, I_\epsilon'(sv^-)v^-\right),$$ standard calculations about the behavior of $t \mapsto I_\epsilon(tv^+)$ and $s \mapsto I_\epsilon(sv^-)$ and the fact that $v \in \mathcal{N}_\epsilon^\pm$, implies that $\psi_v$ has just one critical point given by $(t,s) = (1,1)$.

As we prove in Lemma \ref{lemma4} (which is totally independent of this one), $\psi_v(1,1) = I_\epsilon(v) \geq \rho > 0$. By (\ref{eq1}) in order to get the result its enough to prove that $(1,1)$ is a local maximum point of $\psi_v$.
Note that
$$D^2\psi_v(t,s)=
\left(
\begin{array}{cc}
I_\epsilon''(tv^+)(v^+,v^+) & 0\\
 \\
0 & I_\epsilon''(sv^-)(v^-,v^-)
\end{array} \right)$$
and then 
\begin{eqnarray*}
\det(D^2\psi_v(1,1)) & = & I_\epsilon''(v^+)(v^+,v^+) . I_\epsilon''(v^-)(v^-,v^-)\\
& =  & \left(\int \left(g_\epsilon(\epsilon x,v^+)v^+ - g_\epsilon'(\epsilon x,v^+){v^+}^2\right)\right)\\
& &  . \left(\int \left(g_\epsilon(\epsilon x,v^-)v^- - g_\epsilon'(\epsilon x,v^-){v^-}^2\right)\right).
\end{eqnarray*}
By definition of $g_\epsilon$ and $(g_4)$, the last integral is greater or equal to
$$
\begin{array}{c}
\displaystyle \left(\int_{(supp(v^+)\cap\Omega_\epsilon) \cup \{|v^+| \leq \frac{1}{2}\epsilon^\frac{\tau}{\nu-1}\})} \left(f(v^+)v^+ - f'(v^+){v^+}^2\right)\right) \\
. \displaystyle
\left(\int_{supp(v^-)\cap(\Omega_\epsilon \cup \{|v^-| < \frac{1}{2}\epsilon^\frac{\tau}{\nu-1}\})} \left(f(v^-)v^- - f'(x,v^-){v^-}^2\right)\right)\\
> 0.
 \end{array}$$
In the last inequality we have used that by $(f_4)$, 
$$f(s)s - f'(s)s^2 < 0, \quad \mbox{for all $s\neq 0$}$$
and as $v^\pm \in \tilde{H}$, $|supp(v^+)\cap(\Omega_\epsilon \cup \{|v^+| < a\})| > 0$ and $|supp(v^-)\cap(\Omega_\epsilon \cup \{|v^-| < a\}| > 0$, where $\Omega_\epsilon: = \epsilon^{-1}\Omega$.

Since $D^2\psi_v(1,1)$ is a positive definite form, we have just to verify that $\frac{\partial^2 \psi_v}{\partial t^2} = I_\epsilon''(tv^+){v^+}^2 < 0$. But this follows since $1$ is a maximum point of $t \mapsto I_\epsilon(tv^+)$.
\end{proof}

Still as a consequence of the arguments employed in the construction of the Nehari manifold as in \cite{Rabinowitz}, it follows the following result.

\begin{lemma}
Let $v \in \tilde{H}$ such that $v^\pm \neq 0$ and
$$I_\epsilon'(v)v^\pm \leq 0,$$
then there exists $t,s \in (0,1]$ such that
$$tv^+ + sv^- \in \mathcal{N}_\epsilon^\pm.$$
\label{lemma3}
\end{lemma}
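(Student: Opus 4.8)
The plan is to decouple the coupled condition into two independent one-dimensional problems, one for $v^+$ and one for $v^-$, and then run the standard Nehari fibering-map analysis on each half separately. First I would record that, because $v^+$ and $v^-$ have essentially disjoint supports, for all $t,s>0$ the cross terms vanish and
$$I_\epsilon'(tv^+ + sv^-)(tv^+) = I_\epsilon'(tv^+)(tv^+), \qquad I_\epsilon'(tv^+ + sv^-)(sv^-) = I_\epsilon'(sv^-)(sv^-).$$
This uses $\nabla v^+ \cdot \nabla v^- = 0$ a.e., $v^+v^- = 0$, and the fact that $g_\epsilon(\epsilon x, tv^+ + sv^-)(tv^+) = g_\epsilon(\epsilon x, tv^+)(tv^+)$ since $v^+$ vanishes wherever $v^-$ does not. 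Consequently the lemma reduces to finding $t,s \in (0,1]$ with $I_\epsilon'(tv^+)(tv^+)=0$ and $I_\epsilon'(sv^-)(sv^-)=0$: indeed, for such $t,s$ the element $w := tv^+ + sv^-$ has $w^+ = tv^+$, $w^- = sv^-$, and the displayed identity gives $I_\epsilon'(w)w^\pm = 0$, that is, $w \in \mathcal{N}_\epsilon^\pm$.

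For the positive part I would study the fibering map $\gamma_+(t) := I_\epsilon(tv^+)$, $t \ge 0$, whose derivative satisfies $t\gamma_+'(t) = \phi_+(t)$, where $\phi_+(t) := I_\epsilon'(tv^+)(tv^+) = t^2\|v^+\|_\epsilon^2 - \int g_\epsilon(\epsilon x, tv^+)(tv^+)$. By $(g_1)$ and $(g_2)$ the nonlinear integral is of order higher than $t^2$ as $t \to 0^+$, so $\phi_+(t) > 0$ for all small $t > 0$. The monotonicity condition $(g_4)$ (together with $(g_3)$) is precisely the hypothesis used in the Nehari construction of \cite{Rabinowitz}; it guarantees that $\gamma_+$ possesses a unique critical point $t_+ > 0$, which is its global maximum, with $\phi_+ > 0$ on $(0,t_+)$ and $\phi_+ < 0$ on $(t_+,\infty)$. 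Applying the splitting of the first paragraph with $t=s=1$ gives $\phi_+(1) = I_\epsilon'(v^+)(v^+) = I_\epsilon'(v)v^+ \le 0$ by hypothesis, so the unique sign change $t_+$ of $\phi_+$ must satisfy $t_+ \le 1$; combined with $t_+ > 0$ this yields $t_+ \in (0,1]$. The identical argument for $\gamma_-(s) := I_\epsilon(sv^-)$ produces $s_- \in (0,1]$ with $I_\epsilon'(s_-v^-)(s_-v^-) = 0$.

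Setting $t = t_+$, $s = s_-$ and $w = t_+v^+ + s_-v^-$ then gives $w \in \mathcal{N}_\epsilon^\pm$ by the reduction of the first paragraph, which is the assertion. The only delicate point is ensuring that the zero of $\phi_\pm$ lies to the left of $1$; this rests entirely on $(g_4)$, which forces $\phi_\pm$ to be strictly positive before its unique zero $t_\pm$ and strictly negative afterwards, so that the hypothesis $\phi_\pm(1) \le 0$ pins $t_\pm$ into $(0,1]$. Everything else is the routine bookkeeping of disjoint supports already used in Lemmas \ref{lemma1} and \ref{lemma2}.
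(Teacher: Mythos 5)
Your proof is correct and follows essentially the same route as the paper's: both reduce, via the disjoint supports of $v^+$ and $v^-$, to the one-dimensional Nehari fibering analysis of $t \mapsto I_\epsilon(tv^+)$ and $s \mapsto I_\epsilon(sv^-)$, and both use $(g_4)$ together with the hypothesis $I_\epsilon'(v)v^\pm \le 0$ to force the projection parameters into $(0,1]$ (the paper argues by contradiction, assuming the zero of $\phi_+$ exceeds $1$ and deriving $I_\epsilon'(v)v^+ > 0$ from the strict monotonicity of $f(s)/s$ on a set of positive measure, while you locate the zero directly by the intermediate value theorem from $\phi_\pm > 0$ near $0$ and $\phi_\pm(1) \le 0$). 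One small caveat: since $\tilde{f}_\epsilon(s)/s$ is constant for $|s| \ge \epsilon^{\frac{\tau}{\nu-1}}$, $(g_4)$ is only non-strict monotonicity, so your claim that $\phi_\pm$ has a \emph{unique} sign change needs the positive-measure strictness argument of Lemma \ref{lemma2}; this is harmless, however, because your intermediate-value step already produces a zero in $(0,1]$ without any uniqueness.
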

\begin{proof}
In fact, let $t \in \mathbb{R}$ such that  $I_\epsilon'(tv^+)tv^+ = 0$. Suppose by contradiction that $t > 1$, then
\begin{eqnarray*}
\|v^+\|^2 & = & \int \frac{g_\epsilon(\epsilon x,tv^+)v^+}{t} = \int_{v^+ > 0} \frac{g_\epsilon(\epsilon x,tv^+){v^+}^2}{tv^+}\\
& = & \int_{supp(v^+)\cap(\Omega_\epsilon\cup\{|tv^+| < \frac{1}{2}\epsilon^\frac{\tau}{\nu-1}\}}\frac{f(tv^+){tv^+}^2}{v^+} + \\
& & \int_{supp(v^+)\cap(\Omega_\epsilon\cup\{|tv^+| \geq  \frac{1}{2}\epsilon^\frac{\tau}{\nu-1}\}\}}\frac{g_\epsilon(\epsilon x,tv^+){v^+}^2}{tv^+} \\
& > & \int_{supp(v^+)\cap(\Omega_\epsilon\cup\{|tv^+| < \frac{1}{2}\epsilon^\frac{\tau}{\nu-1}\}}f(v^+)v^+\\
& & + \int_{supp(v^+)\cap(\Omega_\epsilon\cup\{|v^+| \geq  \frac{1}{2}\epsilon^\frac{\tau}{\nu-1}\}\}}\frac{g_\epsilon(\epsilon x,v^+){v^+}^2}{v^+} \\
& = & \int g_\epsilon(\epsilon x,v^+)v^+,
\end{eqnarray*}
which implies that $I_\epsilon'(v)v^+ > 0$, contradicting the hypothesis.
The same argument applies to $v^-$ and $s$.
\end{proof}

Let us define
$$d_\epsilon := \inf_{\mathcal{N}_\epsilon^\pm} I_\epsilon,$$ 
and note that if there exists a solution of (\ref{P2}) in the energy level $d_\epsilon$, then it is the solution with least energy among all nodal ones.

Now we are going to state and prove the main result of this section.

\begin{theorem}
For sufficiently small $\epsilon > 0$, there exists a nodal solution of (\ref{P2}), $v_\epsilon \in \tilde{H}$ such that $I_\epsilon(v_\epsilon) = d_\epsilon$.
\label{theorem2}
\end{theorem}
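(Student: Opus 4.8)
The plan is to obtain $v_\epsilon$ as a minimizer of $I_\epsilon$ over the nodal Nehari set $\mathcal{N}_\epsilon^\pm$, and then argue that this minimizer is a genuine critical point of $I_\epsilon$, hence a nodal solution of (\ref{P2}). First I would take a minimizing sequence $(v_n)\subset \mathcal{N}_\epsilon^\pm$ with $I_\epsilon(v_n)\to d_\epsilon$. Using $(g_3)$ together with the membership $v_n\in\mathcal{N}_\epsilon^\pm$, I would show in the standard way that $I_\epsilon(v_n)=I_\epsilon(v_n)-\frac{1}{\theta}I_\epsilon'(v_n)v_n$ (plus the analogous estimate outside $\Omega_\epsilon$, using the factor $2$ in $(g_3)ii$) controls $\|v_n\|_\epsilon^2$ from above, so that $(v_n)$ is bounded in $\tilde H$. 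Passing to a subsequence, $v_n\rightharpoonup v_\epsilon$ weakly in $\tilde H$, and, crucially, the partial radial symmetry $v(x',x'')=v(x',|x''|)$ encoded in $\tilde H$ should yield a compact embedding (of a Strauss-type, in the $x''$ variables) that gives strong convergence of $v_n^\pm$ in the relevant $L^q$ spaces; this is what lets me pass to the limit in the nonlinear terms.

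Next I would establish that the limit is admissible, i.e. that $v_\epsilon^\pm\neq 0$ and $v_\epsilon\in\mathcal{N}_\epsilon^\pm$. The fact that $v_\epsilon^\pm\neq 0$ should follow from a uniform lower bound: by Lemma \ref{lemma4} (invoked as in Lemma \ref{lemma2}) one has $I_\epsilon(v_n)\geq\rho>0$, and a Sobolev/Nehari argument forces $\|v_n^\pm\|_\epsilon$ to stay bounded away from $0$, which survives in the limit thanks to the strong $L^q$ convergence. To see that $v_\epsilon\in\mathcal{N}_\epsilon^\pm$, I would use weak lower semicontinuity of the norm together with the strong convergence of the nonlinear terms to get $I_\epsilon'(v_\epsilon)v_\epsilon^\pm\leq 0$; then Lemma \ref{lemma3} provides $t,s\in(0,1]$ with $tv_\epsilon^++sv_\epsilon^-\in\mathcal{N}_\epsilon^\pm$. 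Comparing energies and using that $I_\epsilon$ restricted to the relevant $2$-parameter family is strictly maximized at the Nehari point (Lemma \ref{lemma2}), I would conclude $t=s=1$, so $v_\epsilon\in\mathcal{N}_\epsilon^\pm$ and $I_\epsilon(v_\epsilon)=d_\epsilon$; weak lower semicontinuity then upgrades $\liminf I_\epsilon(v_n)\geq I_\epsilon(v_\epsilon)$ to the equality $I_\epsilon(v_\epsilon)=d_\epsilon$, forcing $v_n\to v_\epsilon$ strongly.

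Finally I would show the minimizer is a free critical point of $I_\epsilon$. Since $\mathcal{N}_\epsilon^\pm$ is not a smooth manifold, I cannot use a Lagrange multiplier directly; instead I would use the deformation/quantitative argument built on $\psi_{v_\epsilon}(t,s)=I_\epsilon(tv_\epsilon^++sv_\epsilon^-)$. Suppose $I_\epsilon'(v_\epsilon)\neq 0$; then there is a direction and a small ball on which $\|I_\epsilon'\|$ is bounded below, and the deformation lemma produces a flow decreasing the energy below $d_\epsilon$ while preserving the sign structure. Composing with the map $(t,s)\mapsto tv_\epsilon^++sv_\epsilon^-$ and using that $(1,1)$ is the strict global maximum of $\psi_{v_\epsilon}$ (Lemma \ref{lemma2}), a topological degree argument shows the deformed family still meets $\mathcal{N}_\epsilon^\pm$, contradicting the definition of $d_\epsilon$. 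Hence $I_\epsilon'(v_\epsilon)=0$, and $v_\epsilon$ is the desired least-energy nodal solution of (\ref{P2}). I expect the main obstacle to be the compactness step: verifying that the partial symmetry of $\tilde H$ genuinely restores a compact embedding strong enough to pass to the limit in the nonlinearities (on all of $\mathbb{R}^N$, not merely locally), since the functions are only radial in the $x''$ directions and invariant in the $x'$ directions, so one must handle the noncompact $x'$ directions with the coercivity coming from $V(\epsilon x)\geq V_0>0$.
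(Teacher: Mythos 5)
Your outline of the boundedness step and of the final deformation/degree argument (the map $\psi_{v_\epsilon}$, the quantitative deformation lemma, and Brouwer degree, exactly as in the paper's second step) is fine. The genuine gap is precisely the point you yourself flag as ``the main obstacle'': the compact embedding you invoke does not exist. Functions in $\tilde{H}$ are radial only in the $x''$ variables and completely unrestricted in the $x'$ directions, so for $1\leq k\leq N-2$ a fixed bump translated along a direction of $\mathcal{H}$ gives a bounded sequence in $\tilde{H}$ converging weakly to $0$ whose $L^q(\mathbb{R}^N)$ norms do not vanish; no Strauss-type compactness survives (only the extreme case $k=N-1$, where $\mathcal{H}=\{0\}$, gives full radial symmetry). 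The coercivity $V(\epsilon x)\geq V_0>0$ cannot repair this: it only makes $\|\cdot\|_\epsilon$ equivalent to the standard $H^1$ norm and does nothing against translations in $x'$. Consequently the strong $L^q(\mathbb{R}^N)$ convergence of $v_n^\pm$, your proof that $v_\epsilon^\pm\neq 0$, and the inequality $I_\epsilon'(v_\epsilon)v_\epsilon^\pm\leq 0$ (which requires convergence of the nonlinear terms over all of $\mathbb{R}^N$, not just a.e. or locally) are all unsupported, and with them the whole first half of your argument.

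What actually closes this gap in the paper is not compactness but the design of the penalization --- this is what the paper means by ``the real essence of the modification of penalization really comes up.'' Since $0\leq \tilde{f}_\epsilon'(s)\leq 2\epsilon^\tau$, the exterior functional $I_{\epsilon,\mathbb{R}^N\backslash\Omega_\epsilon}$ satisfies $I_{\epsilon,\mathbb{R}^N\backslash\Omega_\epsilon}''(v)(h,h)\geq (1-C\epsilon^\tau)\|h\|^2_{\epsilon,\mathbb{R}^N\backslash\Omega_\epsilon}>0$ for small $\epsilon$, hence it is strictly convex and therefore weakly lower semicontinuous; strong convergence is then needed only on the bounded set $\Omega_\epsilon$, where Rellich--Kondrachov applies (this is (\ref{eq5.1})), and $w_\epsilon^\pm\neq 0$ follows as in Sato because the Nehari identity together with $|\tilde{f}_\epsilon(s)|\leq\epsilon^\tau|s|$ forces a fixed amount of mass to remain inside $\Omega_\epsilon$. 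Splitting $I_\epsilon=I_{\epsilon,\Omega_\epsilon}+I_{\epsilon,\mathbb{R}^N\backslash\Omega_\epsilon}$ and applying the appropriate lower semicontinuity to each piece along $t_\epsilon w_n^+ + s_\epsilon w_n^-$, the paper obtains $I_\epsilon(t_\epsilon w_\epsilon^+ + s_\epsilon w_\epsilon^-)\leq d_\epsilon$, so the infimum is attained at $t_\epsilon w_\epsilon^+ + s_\epsilon w_\epsilon^-$ --- note that the minimizer need not be the weak limit $w_\epsilon$ itself, which also makes your Lemma \ref{lemma3} step forcing $t=s=1$ unnecessary. If you replace your compactness claim by this convexity/local-compactness mechanism, the remainder of your argument goes through essentially as in the paper.
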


Before proceed with the proof of Theorem \ref{theorem2} let us state some technical result about $\mathcal{N}_\epsilon^\pm$.

\begin{lemma}
It holds that
\begin{description}
\item [$i)$] There exists $\rho > 0$ such that $\|v\|_\epsilon \geq \rho$ for all $v \in \mathcal{N}_\epsilon$.
\item [$ii)$] There exists a constant $C > 0$ such that, for all $v \in \mathcal{N}_\epsilon$, $\displaystyle I_\epsilon(v) \geq C\|v\|_\epsilon^2$.
\end{description}
\label{lemma4}

\end{lemma}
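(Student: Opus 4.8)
The plan is to derive both statements directly from the defining identity of the Nehari manifold, namely $\|v\|_\epsilon^2 = \int g_\epsilon(\epsilon x,v)v$ for $v\in\mathcal{N}_\epsilon$ (which is just $I_\epsilon'(v)v=0$ rewritten), combined with the sublinear-at-zero/subcritical growth recorded in $(g_1)$--$(g_2)$ and the superquadraticity encoded in $(g_3)$. Throughout I would keep the constants uniform in $\epsilon$ for $\epsilon$ small, which is what the later sections need.

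For part $i)$ I would first upgrade $(g_1)$--$(g_2)$ to the estimate that for every $\delta>0$ there is $C_\delta>0$, independent of $x$ and of small $\epsilon$, with $|g_\epsilon(x,s)|\le \delta|s|+C_\delta|s|^p$ for all $s$. This is immediate: for $|s|$ small $g_\epsilon(x,\cdot)$ equals either $f$ or $\tilde f_\epsilon$, both of which are $o(|s|)$ uniformly (by $(f_1)$ and $|\tilde f_\epsilon(s)|\le\epsilon^\tau|s|$), while for $|s|$ bounded away from $0$ one absorbs the linear term into $|s|^p$ using $(g_2)$. Inserting this into the Nehari identity, using $\int v^2\le V_0^{-1}\|v\|_\epsilon^2$ from $(V_1)$ and the Sobolev embedding $\tilde{H}\hookrightarrow L^{p+1}$ (valid since $p+1\in(2,2^*)$, and with a constant uniform in $\epsilon$ because $\|\cdot\|_\epsilon$ dominates the $H^1$-norm up to $\min\{1,V_0\}$), I obtain $\|v\|_\epsilon^2\le \tfrac{\delta}{V_0}\|v\|_\epsilon^2 + C\|v\|_\epsilon^{p+1}$ for some $C=C(\delta)$. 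Choosing $\delta=V_0/2$ absorbs the first term on the left, and since $v\neq0$ and $p>1$ this yields $\|v\|_\epsilon^{p-1}\ge (2C)^{-1}$, i.e. the desired uniform lower bound $\rho$.

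For part $ii)$ I would start from $I_\epsilon(v)=I_\epsilon(v)-\tfrac12 I_\epsilon'(v)v=\int\big(\tfrac12 g_\epsilon(\epsilon x,v)v - G_\epsilon(\epsilon x,v)\big)$ and split the integral over $\Omega_\epsilon:=\epsilon^{-1}\Omega$ and its complement. On the complement $(g_3)ii)$ gives $\tfrac12 g_\epsilon v - G_\epsilon\ge0$, while on $\Omega_\epsilon$ the stronger $(g_3)i)$ gives $\tfrac12 g_\epsilon v - G_\epsilon\ge(\tfrac12-\tfrac1\theta)g_\epsilon(\epsilon x,v)v$, with $\tfrac12-\tfrac1\theta>0$ since $\theta>2$. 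Hence $I_\epsilon(v)\ge(\tfrac12-\tfrac1\theta)\int_{\Omega_\epsilon}g_\epsilon(\epsilon x,v)v$. It then remains to bound this interior integral below by a fixed multiple of $\|v\|_\epsilon^2$: from the Nehari identity and $|\tilde f_\epsilon(s)|\le\epsilon^\tau|s|$ the exterior contribution satisfies $\int_{\mathbb{R}^N\setminus\Omega_\epsilon}g_\epsilon(\epsilon x,v)v\le \epsilon^\tau\int v^2\le \tfrac{\epsilon^\tau}{V_0}\|v\|_\epsilon^2$, so $\int_{\Omega_\epsilon}g_\epsilon(\epsilon x,v)v\ge(1-\tfrac{\epsilon^\tau}{V_0})\|v\|_\epsilon^2\ge\tfrac12\|v\|_\epsilon^2$ once $\epsilon^\tau\le V_0/2$. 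Combining gives $I_\epsilon(v)\ge\tfrac12(\tfrac12-\tfrac1\theta)\|v\|_\epsilon^2=:C\|v\|_\epsilon^2$.

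The one genuinely delicate point, and the step I would watch most carefully, is the exterior region $\mathbb{R}^N\setminus\Omega_\epsilon$. There $(g_3)$ supplies only the quadratic Ambrosetti--Rabinowitz inequality with exponent $2$, so the integrand $\tfrac12 g_\epsilon v-G_\epsilon$ degenerates to $0$ and cannot by itself produce a coercive lower bound for the energy. The resolution is exactly that the penalized nonlinearity is quantitatively small there ($|\tilde f_\epsilon(s)|\le\epsilon^\tau|s|$), so the interior superquadratic surplus dominates the full $\|\cdot\|_\epsilon$-norm for small $\epsilon$; this is where the smallness of $\epsilon$ enters both constants $\rho$ and $C$.
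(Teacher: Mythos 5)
Your proof is correct, with constants uniform in small $\epsilon$ exactly as the later sections require. Since the paper dismisses part $i)$ with ``follows by standard arguments'', your explicit version (the $\delta|s|+C_\delta|s|^p$ bound for $g_\epsilon$, uniform in $x$ and in small $\epsilon$ thanks to $|\tilde{f}_\epsilon(s)|\le\epsilon^\tau|s|$, fed into the Nehari identity via $(V_1)$ and the Sobolev embedding) is precisely what is meant there. For part $ii)$ your route is a mirror image of the paper's rather than a copy of it: the paper computes $I_\epsilon(v)-\tfrac{1}{\theta}I_\epsilon'(v)v$, so the coercive term $\left(\tfrac12-\tfrac1\theta\right)\|v\|_\epsilon^2$ appears at once; it then drops the \emph{interior} surplus, which is nonnegative by $(g_3)i)$, and treats the exterior as a small error, using $G_\epsilon(x,s)\le\tfrac12\epsilon^\tau s^2$ off $\Omega$ to reach
$$I_\epsilon(v)\ \ge\ \frac{\theta-2}{2\theta}\int_{\mathbb{R}^N}\left(|\nabla v|^2+(V(\epsilon x)-\epsilon^\tau)v^2\right)dx\ \ge\ C\|v\|_\epsilon^2 .$$
You instead compute $I_\epsilon(v)-\tfrac12 I_\epsilon'(v)v$, drop the \emph{exterior} (nonnegative by $(g_3)ii)$), keep the interior surplus $\left(\tfrac12-\tfrac1\theta\right)\int_{\Omega_\epsilon}g_\epsilon(\epsilon x,v)v$, and then need a second application of the Nehari identity, combined with the penalization bound, to convert that interior integral back into $\|v\|_\epsilon^2$. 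Both arguments rest on the identical mechanism --- superquadraticity of $g_\epsilon$ on $\Omega$ plus the quantitative smallness $\epsilon^\tau<V_0$ of the penalized term off $\Omega$, which is indeed the delicate point you flag --- and yield comparable constants. The paper's bookkeeping is one step shorter because the full norm is present from the start, while yours makes explicit that the energy on the Nehari set is controlled by the nonlinear mass inside $\Omega$, which is the heuristic content of the lemma.
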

\begin{proof}
The proof of $i)$ follows by standard arguments. Let us prove just $ii)$, which in fact also follows by very known arguments. Note that, if $v \in \mathcal{N}_\epsilon$, by $(g_3)$
\begin{eqnarray*}
I_\epsilon(v) = I_\epsilon(v) -\frac{1}{\theta}{I'}_\epsilon(v)v & = & \left( \frac{1}{2} - \frac{1}{\theta}\right) \parallel v\parallel_\epsilon^2 +\frac{1}{\theta} \int_{{\mathbb{R}}^N} \left( g(x,v)v - \theta G(x,v)\right)dx\\
& \geq & \left( \frac{1}{2} - \frac{1}{\theta}\right) \parallel v\parallel_\epsilon^2 +\frac{1}{\theta} \int_{\Omega_\epsilon^c} \left( g(\epsilon x,v_n)v - \theta G(\epsilon x,v)\right)dx\\
& \geq & \left( \frac{1}{2} - \frac{1}{\theta}\right) \parallel v\parallel_\epsilon^2 +\frac{(2-\theta)}{\theta}\int_{\Omega_\epsilon^c} G(x,u_n)dx\\
& \geq & \left( \frac{1}{2} - \frac{1}{\theta}\right) \parallel v\parallel_\epsilon^2 + \frac{(2-\theta)}{2\theta}\int_{ \Omega_\epsilon^c} \epsilon^\tau v^2 dx\\
& = & \left(\frac{\theta - 2}{2\theta}\right)\int_{\mathbb{R}^N} \left(|\nabla v|^2 +(V(\epsilon x)-\epsilon^\tau) v^2\right)dx\\
& = & \left(\frac{\theta - 2}{2\theta}\right)\int_{\mathbb{R}^N} \left(|\nabla v|^2 +(V_0-\epsilon^\tau) v^2\right)dx\\
& \geq &C \parallel v\parallel_\epsilon^2,                   
\end{eqnarray*}
where $C > 0$ for $\epsilon > 0$ sufficiently small.
\end{proof}

\begin{proof}[{\bf Proof of Theorem \ref{theorem2}}]
The proof will be carried out into two steps.
In the first one we prove that $d_\epsilon$ is attained by a function $u_\epsilon \in \tilde{H}$.

Let $(w_n)$ be a minimizing sequence for  $I_\epsilon$ in $\mathcal{N}_\epsilon^\pm$, i.e., a sequence $(w_n) \subset \mathcal{N}_\epsilon^\pm$ such that
\begin{equation}
\lim_{n \to \infty} I_\epsilon(w_n) = d_\epsilon.
\label{eq2}
\end{equation}

Note that by (\ref{eq2}) and Lemma \ref{lemma4}, $(w_n)$ is a bounded sequence in $\tilde{H}$. Then there exists $w_\epsilon$ such that $w_n \rightharpoonup w_\epsilon$ in $\tilde{H}$ up to a subsequence. In the same way as in Lemma 2.3 in \cite{Castro}, it is possible to show that $v \mapsto v^\pm$ is a continuous function of $\tilde{H}$ into itself, from which follows that $w_n^\pm \rightharpoonup w_\epsilon^\pm$ in $\tilde{H}$. As a consequence, up to a subsequence
\begin{equation}
w_n^\pm \to w_\epsilon^\pm, \quad \mbox{a.e. in $\mathbb{R}^N$}.
\label{eq5}
\end{equation}
and
\begin{equation}
w_n^\pm \to w_\epsilon^\pm, \quad \mbox{in $L^r(\Omega_\epsilon)$, for $1 \leq r < \frac{2(N-k)}{N-k-2}$}
\label{eq5.1}
\end{equation}
where $w_\epsilon^\pm \neq 0$ by the same arguments in Lemma 2.8 in \cite{Sato}.

Since $w_\epsilon^\pm \neq 0$, let $t_\epsilon, s_\epsilon > 0$ be such that $t_\epsilon w_\epsilon^+ + s_\epsilon w_\epsilon^- \in \mathcal{N}_\epsilon^\pm$. By weak lower-semicontinously of $\|\cdot\|$ and by Sobolev embeddings it follows that
\begin{equation}
\|t_\epsilon w_\epsilon^+ + s_\epsilon w_\epsilon^-\|_\epsilon \leq \liminf_{n \to \infty} \|t_\epsilon w_n^+ + s_\epsilon w_n^-\|_\epsilon
\label{eq5.2}
\end{equation}
and 
\begin{equation}
\int_{\Omega_\epsilon} F(t_\epsilon w_\epsilon^+ + s_\epsilon w_\epsilon^-) = \liminf_{n \to \infty} \int_{\Omega_\epsilon} F(t_\epsilon w_n^+ + s_\epsilon w_n^-).
\label{eq5.3}
\end{equation}

Now the real essence of the modification of penalization really comes up. Note that for $\epsilon > 0$ sufficiently small
$$I_ {\epsilon, \mathbb{R}^N\backslash \Omega_\epsilon}(v):= \frac{1}{2}\int_{\mathbb{R}^N\backslash \Omega_\epsilon}\left(|\nabla v|^2 + V(\epsilon x)v^2\right) - \int_{\mathbb{R}^N\backslash \Omega_\epsilon}\tilde{F}_{\epsilon}(v)$$
is a strictly convex functional in $\tilde{H}({\mathbb{R}^n\backslash \Omega_\epsilon}) = \left\{ v \in H^1(\mathbb{R}^N\backslash\Omega_\epsilon), \int_{\mathbb{R}^N\backslash\Omega_\epsilon}\left(|\nabla v|^2 + V(\epsilon x)v^2\right) < \infty \right\}$. In fact, for $v,h \in \tilde{H}({\mathbb{R}^n\backslash \Omega_\epsilon})$, $h \neq 0$, by (\ref{penalization2}) and Sobolev embeddings
\begin{eqnarray*}
I_ {\epsilon, \mathbb{R}^N\backslash \Omega_\epsilon}''(v)(h,h) & = & \|h\|_{\epsilon,\mathbb{R}^n\backslash \Omega_\epsilon}^2 - \int_{\mathbb{R}^n\backslash \Omega_\epsilon}\tilde{f}_{\epsilon}'(v)h^2\\
& \geq & \|h\|_{\epsilon,\mathbb{R}^n\backslash \Omega_\epsilon}^2\left(1 - \epsilon^\tau\right) > 0,
\end{eqnarray*}
for $\epsilon > 0$ sufficiently small.
Then, by convex analysis it follows that $I_ {\epsilon, \mathbb{R}^N\backslash \Omega_\epsilon}$ is weakly lower semicontinuous. Then (\ref{eq5.2}), (\ref{eq5.3}) and this fact imply that
\begin{eqnarray*}
I_\epsilon(t_\epsilon w_\epsilon^+ + s_\epsilon w_\epsilon^-) & \leq & \liminf_{n \to \infty} I_\epsilon(t_\epsilon w_\epsilon^+ + s_\epsilon w_\epsilon^-)\\
& \leq & \liminf_{n \to \infty} (I_\epsilon(t_\epsilon w_\epsilon^+) + I_\epsilon(s_\epsilon w_\epsilon^-))\\
& \leq & \liminf_{n \to \infty} (I_\epsilon(w_\epsilon^+) + I_\epsilon(w_\epsilon^-))\\
& = & b_\epsilon.
\end{eqnarray*}

Hence $I_\epsilon(t_\epsilon w_\epsilon^+ + s_\epsilon w_\epsilon^-) = b_\epsilon$.

The second step is proving that $v_\epsilon \in \tilde{H}$ which minimizes $I_\epsilon$ on $\mathcal{N}_\epsilon^\pm$ is a critical point of $I_\epsilon$ in $\tilde{H}$. This can be done by employing the same arguments of Section 3 in \cite{Bartsch}. For the sake of completeness we include all the details of this proof.

Supposing by contradiction that $I_\epsilon'(v_\epsilon) \neq 0$, there exists $\delta, \lambda > 0$ such that 
\begin{equation}
\| I_\epsilon'(v)\|_* \geq \lambda, \quad \mbox{for all $v \in B_\delta(v_\epsilon) \subset \tilde{H}$.}
\label{eq9}
\end{equation}

Let us consider the function $(t,s) \mapsto tv_\epsilon^+ + sv_\epsilon^-$ defined on $D = \left( \frac{1}{2}, \frac{3}{2} \right)$ and note that by Lemma \ref{lemma2}
\begin{equation}
\delta_\epsilon:= \max_{(t,s) \in \partial D} I_\epsilon( tv_\epsilon^+ + sv_\epsilon^-) < I_\epsilon(v_\epsilon) = d_\epsilon.
\label{eq10}
\end{equation}
Taking $\rho = \min\left\{\frac{d_\epsilon - \delta_\epsilon}{2}, \frac{\lambda\delta}{24}\right\}$ and $S = B_{\delta/3}(v_\epsilon)$, Lemma 2.3 in \cite{Willem} yields a deformation $\eta$ such that
\begin{description}
\item [$i)$] $\eta(1,v) = v$ for all $v \not\in I_\epsilon^{-1}([d_\epsilon-2\rho,d_\epsilon+2\rho])$,
\item [$ii)$] $\eta(1,I_\epsilon^{d_\epsilon+\rho}\cap S) \subset I_\epsilon^{d_\epsilon-\rho}$,
\item [$iii)$] $I_\epsilon(\eta(1,v)) \leq I_\epsilon(v)$, for all $u \in \tilde{H}$.
\end{description}
\begin{claim}
$$\max_{(t,s)\in \overline{D}}I_\epsilon(\eta(1,tv_\epsilon^+ + sv_\epsilon^-)) < d_\epsilon.$$
\end{claim}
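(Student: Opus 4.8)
The plan is to reduce the Claim to a pointwise strict inequality reinforced by a compactness argument. Write $h(t,s) := t v_\epsilon^+ + s v_\epsilon^-$, so that $h:\overline{D}\to\tilde{H}$ is continuous and, since $\eta(1,\cdot)$ and $I_\epsilon$ are continuous, the map $(t,s)\mapsto I_\epsilon(\eta(1,h(t,s)))$ is continuous on the compact square $\overline{D}$. Its maximum is therefore attained, and it suffices to prove that $I_\epsilon(\eta(1,h(t,s))) < d_\epsilon$ holds for every single $(t,s)\in\overline{D}$: a continuous function on a compact set that is strictly below $d_\epsilon$ at each point attains a maximum which is itself strictly below $d_\epsilon$.

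To obtain the pointwise bound I would separate the point $(1,1)$ from the rest. For $(t,s)\neq(1,1)$, Lemma \ref{lemma2} guarantees that $(1,1)$ is the \emph{strict} global maximum of $\psi_{v_\epsilon}$, hence $I_\epsilon(h(t,s)) = \psi_{v_\epsilon}(t,s) < \psi_{v_\epsilon}(1,1) = I_\epsilon(v_\epsilon) = d_\epsilon$; property $iii)$ of the deformation $\eta$ (it never increases energy) then gives $I_\epsilon(\eta(1,h(t,s))) \leq I_\epsilon(h(t,s)) < d_\epsilon$.

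The remaining point $(t,s)=(1,1)$ is precisely where property $iii)$ is insufficient, since there $I_\epsilon(h(1,1)) = I_\epsilon(v_\epsilon) = d_\epsilon$ and $iii)$ only yields $\leq d_\epsilon$. Here I would invoke property $ii)$ instead. Observe that $h(1,1)=v_\epsilon$ belongs to $S = B_{\delta/3}(v_\epsilon)$ (it is the center of the ball) and satisfies $I_\epsilon(v_\epsilon) = d_\epsilon \leq d_\epsilon + \rho$, so $v_\epsilon \in I_\epsilon^{d_\epsilon+\rho}\cap S$. Property $ii)$ then pushes it strictly down: $\eta(1,v_\epsilon)\in I_\epsilon^{d_\epsilon-\rho}$, i.e. $I_\epsilon(\eta(1,h(1,1))) \leq d_\epsilon - \rho < d_\epsilon$. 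Combining the two cases establishes the strict inequality at every $(t,s)\in\overline{D}$, and the compactness remark from the first paragraph then closes the Claim.

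I expect the only genuinely delicate point to be the passage from \emph{strictly less than $d_\epsilon$ at each point} to \emph{the maximum is strictly less than $d_\epsilon$}; this is legitimate exactly because $\overline{D}$ is compact and the map is continuous, so the supremum is realized at some $(t_0,s_0)$ and equals the subcritical value attained there, whereas on a non-compact domain the inference would fail. The rest is bookkeeping: verifying that $v_\epsilon$ lies in the sublevel-set-intersect-ball on which $\eta$ is guaranteed to strictly decrease the energy, which is immediate from $S = B_{\delta/3}(v_\epsilon)$ and $\rho \leq (d_\epsilon-\delta_\epsilon)/2$.
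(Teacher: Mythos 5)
Your proposal is correct and takes essentially the same route as the paper's own proof: Lemma \ref{lemma2} together with the energy-nonincreasing property $iii)$ of the deformation handles every $(t,s)\neq(1,1)$, while property $ii)$ (applied to $v_\epsilon$, which lies in $I_\epsilon^{d_\epsilon+\rho}\cap S$) disposes of the point $(1,1)$. The only difference is that you spell out the compactness-of-$\overline{D}$ argument needed to pass from pointwise strict inequality to strictness of the maximum, a step the paper leaves implicit.
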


In fact, if $(t,s)\neq (1,1)$, then by Lemma \ref{lemma2},
$$I_\epsilon(\eta(1,tv_\epsilon^+ + sv_\epsilon^-)) \leq I_\epsilon(tv_\epsilon^+ + sv_\epsilon^-) < d_\epsilon.$$
On the other hand, by $ii)$,
$$I_\epsilon(\eta(1,v_\epsilon)) < d_\epsilon - \rho,$$
which proves the claim.

Now, let us prove that there exists $(t,s) \in D$ such that $\eta(1,tv_\epsilon^+ + sv_\epsilon^-) \cap \mathcal{N}_\epsilon^\pm \neq \emptyset$, which together with the Claim, contradicts the definition of $d_\epsilon$.

Define $h(t,s) = \eta(1,tv_\epsilon^+ + sv_\epsilon^-)$ and $\Psi_0, \Psi_1: \mathbb{R}^2 \to \mathbb{R}^2$ by
$$\Psi_0(t,s) = (I_\epsilon'(t v_\epsilon^+)v_\epsilon^+, I_\epsilon'(s v_\epsilon^-)v_\epsilon^-),$$
and
$$\Psi_1(t,s) = \left(\frac{1}{t}I_\epsilon'(h(t,s)^+)h(t,s)^+, \frac{1}{s}I_\epsilon'(h(t,s)^-)h(t,s)^-)\right).$$
By results of Brower Degree Theory, $\deg(\Psi_0,D,(0,0)) = 1$. On the other hand, note that by (\ref{eq10}), $(t,s)\mapsto tv_\epsilon^+ + sv_\epsilon^-$ coincides with $h$ on $\partial D$. Hence $\Psi_0 = \Psi_1$ on $\partial D$ and then $\deg(\Psi_1,D,(0,0)) = \deg(\Psi_0,D,(0,0)) = 1$. Therefore there exists $(t,s) \in D$ such that $\Psi_1(t,s) = (0,0)$ and consequently
$$\eta(1,tv_ \epsilon^+ + sv_\epsilon^-) = h(t,s) \in \mathcal{N}_\epsilon^\pm.$$

Finally, this contradiction proves the theorem.
\end{proof}

\section{Concentration results}

Let us introduce a sequence $\epsilon_n \to 0$ as $n \to \infty$ and, for each $n \in \mathbb{N}$, let us denote by $v_n$ the solution $v_{\epsilon_n}$ given by Theorem \ref{theorem2} and consider $d_n:= d_{\epsilon_n}$, $\|\cdot\|_n:=\|\cdot\|_{\epsilon_n}$ and $I_n: = I_{\epsilon_n}$.

The following result provides an upper estimate for the sequence of minimax values $d_n$. Its proof is inspired in the arguments of Alves and Soares in \cite{Soares1}.

\begin{lemma}
$$\limsup_{n \to \infty}\epsilon_n^k d_n \leq 2\omega_k\inf_{\Omega\cap \mathcal{H}^\perp} \mathcal{M}$$
and
$$\epsilon_n^k\|v_n\|_n^2 \leq C.$$
\label{lemma5.1}

\end{lemma}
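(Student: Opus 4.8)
The plan is to reduce the second estimate to the first and to establish the first by exhibiting an explicit competitor in $\mathcal{N}_{\epsilon_n}^\pm$ whose energy carries exactly the weight $2\omega_k\mathcal{M}$. For the reduction, observe that $\mathcal{N}_\epsilon^\pm \subset \mathcal{N}_\epsilon$, since $I_\epsilon'(v)v = I_\epsilon'(v)v^+ + I_\epsilon'(v)v^- = 0$ whenever $v \in \mathcal{N}_\epsilon^\pm$. Hence Lemma \ref{lemma4} $(ii)$ applied to $v_n$ gives $\|v_n\|_n^2 \le C^{-1} I_n(v_n) = C^{-1} d_n$. Multiplying by $\epsilon_n^k$ and invoking the first estimate, which renders the sequence $\epsilon_n^k d_n$ bounded, yields $\epsilon_n^k\|v_n\|_n^2 \le C$. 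Thus everything rests on the upper bound for $\epsilon_n^k d_n$.

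To bound $d_\epsilon$ from above I would build a cylindrically symmetric test function concentrated on the sphere of radius $R = |\bar x''|$, where $\bar x = (0,\bar x'') \in \Omega\cap\mathcal{H}^\perp$ is chosen so that $\mathcal{M}(\bar x) = R^k\mathcal{E}(V(\bar x))$ is as close as desired to $\inf_{\Omega\cap\mathcal{H}^\perp}\mathcal{M}$. Let $a = V(\bar x)$ and let $w$ be a positive ground state of the limit problem (\ref{PL}) in $\mathbb{R}^{N-k}$, written in variables $(y',t)$ with $y'\in\mathcal{H}$, $t\in\mathbb{R}$, so that $I_a(w) = \mathcal{E}(a)$; recall that $w$ decays exponentially. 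Up to a smooth cutoff $\zeta_\epsilon$ supported well inside $\Omega_\epsilon$, I would then set
$$\Phi_\epsilon^+(x) = \zeta_\epsilon(x)\, w\!\left(x'-\xi_1,\,|x''|-R/\epsilon\right), \qquad \Phi_\epsilon^-(x) = -\zeta_\epsilon(x)\, w\!\left(x'-\xi_2,\,|x''|-R/\epsilon\right),$$
with $\xi_1,\xi_2\in\mathcal{H}$ chosen so that $\epsilon\xi_i \to \bar x' = 0$ while $|\xi_1-\xi_2|\to\infty$, which keeps both bumps near the same sphere yet forces disjoint supports. Both functions lie in $\tilde{H}$, and Lemma \ref{lemma1} furnishes $t_\epsilon,s_\epsilon>0$ with $t_\epsilon\Phi_\epsilon^+ + s_\epsilon\Phi_\epsilon^- \in \mathcal{N}_\epsilon^\pm$, whence $d_\epsilon \le I_\epsilon(t_\epsilon\Phi_\epsilon^+ + s_\epsilon\Phi_\epsilon^-)$.

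The heart of the matter is the energy computation. Since the two bumps have disjoint support, $I_\epsilon(t_\epsilon\Phi_\epsilon^+ + s_\epsilon\Phi_\epsilon^-) = I_\epsilon(t_\epsilon\Phi_\epsilon^+) + I_\epsilon(s_\epsilon\Phi_\epsilon^-)$, and each term is controlled by the maximum of the corresponding fibre map, attained at a Nehari point. Passing to polar coordinates on $\mathcal{H}^\perp\cong\mathbb{R}^{k+1}$ turns each integral into $\omega_k\int_{\mathcal{H}}\int_0^\infty [\,\cdot\,]\,r^k\,dr\,dx'$, where $\omega_k$ is the surface measure of the unit $k$-sphere. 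Substituting $r = R/\epsilon + t$ and using that $R/\epsilon\to\infty$ (so the lower limit $-R/\epsilon\to-\infty$ recovers the full line), that $r^k = (R/\epsilon)^k(1+\epsilon t/R)^k$ with the error absorbed by the exponential decay of $w$, and that $V(\epsilon x)\to V(\bar x) = a$ on the support of the bump, I expect
$$\epsilon^k I_\epsilon(t_\epsilon\Phi_\epsilon^+) \longrightarrow \omega_k R^k I_a(w) = \omega_k\mathcal{M}(\bar x), \qquad t_\epsilon \to 1,$$
and likewise for the negative part. Summing the two contributions and letting $\bar x$ approach the infimum gives $\limsup_n \epsilon_n^k d_n \le 2\omega_k\inf_{\Omega\cap\mathcal{H}^\perp}\mathcal{M}$. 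The main obstacle is precisely this limit: one must show that the curvature weight $r^k$ concentrates to the constant $(R/\epsilon)^k$ and that the half-space $r>0$ becomes the full space $\mathbb{R}$ in the limit, both of which hinge on the divergence $R/\epsilon\to\infty$ together with the exponential decay of $w$, so that all error terms are $o(\epsilon^{-k})$.
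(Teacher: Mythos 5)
Your reduction of the norm bound to the energy bound is exactly the paper's: $\mathcal{N}_\epsilon^\pm\subset\mathcal{N}_\epsilon$, so Lemma \ref{lemma4}$(ii)$ gives $\epsilon_n^k\|v_n\|_n^2\le C^{-1}\epsilon_n^kd_n$, which is bounded once the first estimate holds. The overall strategy for the energy bound also parallels the paper's proof: take two disjointly supported, cylindrically symmetric copies of the ground state of the limit problem (\ref{PL}), glue them with opposite signs, project onto $\mathcal{N}_\epsilon^\pm$ via Lemma \ref{lemma1}, split the energy by disjointness of supports, and pass to polar coordinates in $\mathcal{H}^\perp$ so that the weight $r^k$ contributes $(R/\epsilon)^k(1+o(1))$; both arguments then control each term by the maximum of the fibre map $t\mapsto I_a(tw)$, which is attained at the ground state and equals $\mathcal{E}(a)$.

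However, there is a genuine gap in how you separate the two bumps. You place both on the \emph{same} sphere $\{|x''|=R/\epsilon\}$ and separate them by translations $\xi_1,\xi_2\in\mathcal{H}$ with $|\xi_1-\xi_2|\to\infty$. This is impossible when $k=N-1$, a case allowed by Theorem \ref{theorem1.1}: there $\mathcal{H}$ is $(N-k-1)=0$-dimensional, i.e. $\mathcal{H}=\{0\}$, the functions of $\tilde{H}$ are radial, and two admissible bumps attached to the same sphere necessarily coincide, so no nodal competitor of your form exists. This is not a corner case; $k=N-1$ is precisely the classical concentration-on-hyperspheres setting. The paper avoids the problem by separating the bumps \emph{radially}: it takes $z_1,z_2\in\partial B_R(z_0)$ with $|Q_1''-Q_2''|=2R$, so the two bumps sit on two distinct nearby spheres, have disjoint supports for every $1\le k\le N-1$, each contributes $\epsilon_n^{-k}\omega_k\mathcal{M}(z_i)+o(\epsilon_n^{-k})$, and then $R\to0$ together with continuity of $V$ and $\mathcal{M}$ recovers the constant $2\omega_k\inf\mathcal{M}$. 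Your argument is fine for $1\le k\le N-2$ and there it even spares the final limit $R\to0$, but to cover $k=N-1$ you must switch to the radial separation. A secondary point: since $w>0$ everywhere, $|\xi_1-\xi_2|\to\infty$ alone does not ``force disjoint supports''; you need the cutoffs to be localized around each center at a scale that diverges but is $o(|\xi_1-\xi_2|)$, which is what actually makes the supports disjoint while keeping the cutoff error negligible.
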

\begin{proof}
Let $z_0=(x_0,y_0) \in \Omega$ be such that $\mathcal{M}(z_0) = \inf_{x \in \Omega} \mathcal{M}(x)$. Since $\Omega$ is an open set, there exists $R > 0$ such that $B_{2R}(z_0) \subset \Omega$. Let us choose points $z_1,z_2 \in \partial  B_R(z_0)$ such that, if $z_i=(Q_i',Q_i'')$, then $|Q_1'' - Q_2''| = 2R$. Note that $B_R(z_i) \subset \Omega$ for $i = 1,2$.
In the rest of this proof, $i \in \{1,2\}$.
Let us choose smooth cut-off functions $\psi_i:\mathbb{R}^{N-k} \to \mathbb{R}$ such that $\psi_i = 1$ in $B_{\mathbb{R}^{N-k}}((Q_i',|Q_i''|),R/4)$ and $\psi_i = 0$ in $\mathbb{R}^{N-k} \backslash B_{\mathbb{R}^{N-k}}((Q_i',|Q_i''|),R/2)$.

Let $w_i \in H^1(\mathbb{R}^{N-k})$, be a ground-state solution of
$$
-\Delta u + V(z_i) u= f(u) \quad \mbox{in $\mathbb{R}^{N-k}$}
$$
and $w_{n,z_i}:\mathbb{R}^N \to \mathbb{R}$ be given by
$$ w_{n,z_i}(x',x'') = \psi_i(\epsilon_n x',\epsilon_n |x''|)w_i \left(x' - \frac{Q_i'}{\epsilon_n},|x''| - \frac{|Q_i''|}{\epsilon_n}\right).$$
We associate for each $w_{n,z_i}(x)$ its $k-$dimensional counterpart $\tilde{w}_{n,z_i}(x',r) = w_{n,z_i}(x',x'')$ where $|x''| = r \in \mathbb{R}$.
It is well known the existence of $t_{i_n} > 0$ such that $t_{i_n}w_{n,z_i} \in \mathcal{N}_{\epsilon_n}$.
By the construction we have made so far, it is straightforward to see that 
$$\overline{w}_n := t_{1_n}w_{n,z_1} - t_{2_n}w_{n,z_2} \in \mathcal{N}_{\epsilon_n}^\pm.$$

Using the fact that $I_n$ is even, we have
\begin{equation}
d_n  \leq  I_n(\overline{w}_n) = I_n(t_{1_n}w_{n,z_1}  ) + I_n(t_{2_n}w_{n,z_2}).
\label{eq11}
\end{equation}

By a change of variable, let us note that for $i \in \{1,2\}$, $I_n(t_nw_{n,z_i})$ is equal to
$$
\begin{array}{l}
\displaystyle\frac{t_{i_n}^2}{2}\int_{\mathbb{R}^N}\left(|\nabla w_{n,z_i}|^2 + V(\epsilon_n x)w_{n,z_i}^2\right)dx - \int_{\mathbb{R}^N}G_n(\epsilon_n x, t_{i_n}w_{n,z_i})dx \\ \\
= \displaystyle\omega_k\left[ \frac{t_{i_n}^2}{2}\int_{\mathbb{R}^{N-k-1}}\int_0^{+\infty}r^k\left(|\nabla\tilde{w}_{n,z_i}(x',r)|^2 + V(\epsilon_n x', \epsilon_n r)\tilde{w}_{n,z_i}(x',r)^2\right)drdx'\right.\\ \\
\displaystyle\left. - \int_{\mathbb{R}^{N-k-1}}\int_0^{+\infty}r^kG_n(\epsilon_n x, \epsilon_n r,t_{i_n}\tilde{w}_{n,z_i}(x',r))drdx'\right] \\ \\
= \displaystyle\omega_k\left[ \frac{t_{i_n}^2}{2}\int_{\mathbb{R}^{N-k-1}} \int_{\frac{-|Q_i''|}{\epsilon_n}}^{+\infty}\left(\sigma + \frac{|Q_i''|}{\epsilon_n}\right)^k\left(|\nabla(\psi_i(\epsilon_n x' + Q_i',\epsilon_n\sigma+|Q_i''|)w_i(x',\sigma)) |^2\right. \right.\\ \\
\displaystyle\left. \left. + V(\epsilon_n x' + \epsilon_nQ_i',\epsilon_n\sigma + |Q_i''|)(\psi_i(\epsilon_n x' + \epsilon_n Q_i',\epsilon_n\sigma+|Q_i''|)w_i(x',\sigma)^2\right)d\sigma dx'\right.\\ \\
\displaystyle\left. - \hspace{-0.2cm} \int_{\frac{-|Q_i''|}{\epsilon_n}}^{+\infty}\hspace{-0.2cm}\left(\sigma \hspace{-0.1cm} + \hspace{-0.1cm} \frac{|Q_i''|}{\epsilon_n}\right)^k \hspace{-0.3cm}G_n(\epsilon_n x' \hspace{-0.2cm}+ \hspace{-0.1cm} \epsilon_n Q_i',\epsilon_n\sigma \hspace{-0.1cm} + \hspace{-0.1cm} |Q_i''|, t_{i_n}\psi_i(\epsilon_n x' \hspace{-0.1cm}+ \hspace{-0.1cm} \epsilon_n Q_i', \epsilon_n\sigma \hspace{-0.1cm}+ \hspace{-0.1cm}|Q_i''|)w_i(x',\sigma))d\sigma dx' \hspace{-0.1cm}\right] \\ \\
= \displaystyle\epsilon_n^{-k}\omega_k|Q_i|^k I_{V(z_i)}(t_{i_n}w_i) + o(1) \leq \epsilon_n^{-k}\omega_k |Q_i|^k \mathcal{E}(V(z_i)) = \epsilon_n^{-k}\omega_k\mathcal{M}(z_i) + o(1).
\displaystyle\end{array}
$$

By the last inequality and (\ref{eq11}) we have
\begin{equation}
\epsilon_n^k d_n \leq \omega_k\left(\mathcal{M}(z_1) + \mathcal{M}(z_2)\right) + o(1)
\label{eq12}
\end{equation}
and making $R \to 0$, continuity of $V$ and $\mathcal{M}$ implies in the result.

Now, in order to see that
$$\epsilon_n^k\|v_n\|_n^2 \leq C,$$
just observe that $v_n \in \mathcal{N}_{\epsilon_n}$ and use $ii)$ of Lemma \ref{lemma4}.
\end{proof}

The next lemma implies that solutions found in Theorem \ref{theorem2} do not vanish when $n \to \infty$.

\begin{lemma}
Let $P_n^1$, $P_n^2$ be local maximum and minimum points of $v_n$, respectively. Then $P_n^i \in \Omega_{\epsilon_n}:=\epsilon_n^{-1}\Omega$,
$$v_n(P_n^1) \geq a \quad \mbox{and} \quad  v_n(P_n^2) \leq -a,$$
where $a > 0$ is such that $ f(a)/a= V_0/2$.
\label{lemma5.2}
\end{lemma}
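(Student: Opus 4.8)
The plan is to run a pointwise comparison at the extrema of $v_n$, exploiting that outside $\Omega_{\epsilon_n}$ the nonlinearity is the penalized one, controlled by $|\tilde{f}_{\epsilon_n}(s)|\le\epsilon_n^\tau|s|$ through (\ref{penalization1}), whereas inside $\Omega_{\epsilon_n}$ it is the genuine $f$, whose quotient $f(s)/s$ is monotone by $(f_4)$. Before doing so I would record the regularity and decay of $v_n$: since $g_{\epsilon_n}$ has subcritical growth $(g_2)$, a Brezis--Kato/Moser iteration gives $v_n\in L^\infty$, and elliptic estimates then give $v_n\in W^{2,p}_{loc}\cap C^{1,\alpha}_{loc}$ with $v_n(x)\to 0$ as $|x|\to\infty$; in particular the global maximum and minimum are attained, and since $v_n$ is nodal these values are strictly positive and strictly negative. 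Thus I read $P_n^1$ (resp. $P_n^2$) as a point where $v_n(P_n^1)=\max v_n>0$ (resp. $v_n(P_n^2)=\min v_n<0$), recalling that $a>0$ is fixed by $f(a)/a=V_0/2$.

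For the maximum, set $M:=v_n(P_n^1)>0$. At an interior maximum one has $-\Delta v_n(P_n^1)\ge 0$, so the equation (\ref{P2}) gives $V(\epsilon_n P_n^1)M\le g_{\epsilon_n}(\epsilon_n P_n^1,M)$. I would first rule out $\epsilon_n P_n^1\notin\Omega$: there $g_{\epsilon_n}(\epsilon_n P_n^1,M)=\tilde{f}_{\epsilon_n}(M)\le\epsilon_n^\tau M$, whence $V_0\le V(\epsilon_n P_n^1)\le\epsilon_n^\tau$, impossible for $n$ large. Hence $P_n^1\in\Omega_{\epsilon_n}$ and $g_{\epsilon_n}(\epsilon_n P_n^1,\cdot)=f$, so that $V_0 M\le f(M)$, i.e. $f(M)/M\ge V_0>V_0/2=f(a)/a$; since $s\mapsto f(s)/s$ is increasing on $(0,\infty)$ by $(f_4)$, this forces $M\ge a$, that is $v_n(P_n^1)\ge a$.

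The minimum is treated symmetrically: with $m:=v_n(P_n^2)<0$ one has $-\Delta v_n(P_n^2)\le 0$, so $V(\epsilon_n P_n^2)m\ge g_{\epsilon_n}(\epsilon_n P_n^2,m)$. If $\epsilon_n P_n^2\notin\Omega$ then $g_{\epsilon_n}(\epsilon_n P_n^2,m)=\tilde{f}_{\epsilon_n}(m)\ge-\epsilon_n^\tau|m|=\epsilon_n^\tau m$, and dividing by $m<0$ gives $V(\epsilon_n P_n^2)\le\epsilon_n^\tau$, again impossible; hence $P_n^2\in\Omega_{\epsilon_n}$ and $V(\epsilon_n P_n^2)m\ge f(m)$. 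Dividing by $m<0$ and using the oddness of $f$ (so $f(-a)/(-a)=f(a)/a=V_0/2$) together with the fact that $s\mapsto f(s)/s$ is decreasing on $(-\infty,0)$ yields $f(m)/m\ge V_0>V_0/2=f(-a)/(-a)$, and therefore $m\le-a$, i.e. $v_n(P_n^2)\le-a$. The step I expect to require the most care is justifying the pointwise sign of $\Delta v_n$ at the extrema: because $\chi_\Omega$ makes the right-hand side discontinuous across $\partial\Omega_{\epsilon_n}$, $v_n$ need not be $C^2$ there, so one should either confine the pointwise argument to the open sets $\Omega_{\epsilon_n}$ and $\mathrm{int}(\Omega_{\epsilon_n}^c)$, where $v_n$ is a classical solution, and supplement it with the weak maximum principle applied to the subsolution inequality $-\Delta v_n^{\pm}+(V_0-\epsilon_n^\tau)v_n^{\pm}\le 0$ (valid on $\mathrm{int}(\Omega_{\epsilon_n}^c)$ for $n$ large) to push the positive maximum and negative minimum into $\overline{\Omega_{\epsilon_n}}$, or invoke a maximum principle valid for $W^{2,p}$ functions.
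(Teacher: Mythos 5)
Your proof is correct and takes essentially the same route as the paper's: sign of the Laplacian at the extremum point, the penalization bound (\ref{penalization1}) to exclude $P_n^i \notin \Omega_{\epsilon_n}$, and the monotonicity of $s \mapsto f(s)/s$ from $(f_4)$ to force $|v_n(P_n^i)| \geq a$. Your additional care about the pointwise meaning of $\Delta v_n$ across $\partial\Omega_{\epsilon_n}$, where the discontinuous $\chi_\Omega$ leaves $v_n$ only $W^{2,p}_{loc}$, is a legitimate refinement of a point the paper glosses over.
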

\begin{proof}
First of all let us prove that $P_n^i \in \Omega_{\epsilon_n}$. 
Suppose by contradiction that $P_n^1 \not \in \Omega_{\epsilon_n}$. Since $P_n^1$ is a local maximum point, it follows that $\Delta v_n(P_n^1) \leq 0$. By definition of $g_n$ we have
$$ V_0 v_n(P_n^1) \leq-\Delta v_n(P_n^1) + V(\epsilon_n P_n^1)v_n(P_n^1) = \tilde{f}_{\epsilon_n}(v_n(P_n^1)) \leq \epsilon_n^\tau v_n(P_n^1),$$
which is impossible for $\epsilon > 0$ sufficiently small.
A similar argument applies to $P_n^2$.

Since $P_n^1 \in \Omega_{\epsilon_n}$ and $v_n(P_n^1) > 0$, from the definition of $g_n$ we have
$$0 \geq \Delta v_n(P_n^1) = \left(V(\epsilon_n P_n^1) - \frac{f(v_n(P_n^1))}{v_n(P_n^1)}\right)v_n(P_n^1).$$
Supposing by contradiction that $v_n(P_n^1) < a$. By the choice of $a > 0$ and $(f_5)$ it follows that
\begin{equation}
V_0 \leq V(\epsilon_n P_n^1) \leq \frac{f(v_n(P_n^1))}{v_n(P_n^1)} \leq \frac{f(a)}{a} = \frac{V_0}{2},
\label{eq13}
\end{equation}
which is a contradiction.
Analogously we prove that $v_n(P_n^2) \leq -a$.
\end{proof}

By the last result, there exist $P^1, P^2 \in \Omega$ such that, along a subsequence
\begin{equation}
\lim_{n \to \infty} \epsilon_n P_n^i = P^i, \quad i \in \{1,2\}.
\label{Pi}
\end{equation}

The same argument of \cite{Soares1} with short modifications can be used to prove the following result.
\begin{lemma}
Using the same notation that in the last result, it follows that
\begin{equation}
\lim_{n \to \infty}|P_n^1 - P_n^2| = +\infty.
\label{eq14}
\end{equation}
\label{lemma5.3}
\end{lemma}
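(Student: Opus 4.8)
The plan is to argue by contradiction, so suppose that along a subsequence $|P_n^1 - P_n^2|$ remains bounded. Recall from Lemma \ref{lemma5.2} that $P_n^1, P_n^2 \in \Omega_{\epsilon_n}$ and from (\ref{Pi}) that $\epsilon_n P_n^i \to P^i \in \overline{\Omega}$. I first note that $|P^{1''}| > 0$: every point of $\overline{\Omega}$ has positive $\mathcal{M}$-value (since $\mathcal{M}_0 > 0$ and, by $(\mathcal{M}_1)$, the boundary value is even larger), and $\mathcal{M}(x) = |x''|^k\mathcal{E}(V(x))$ forces $|x''| \neq 0$. Consequently the radius $|P_n^{1''}| \sim |P^{1''}|/\epsilon_n \to +\infty$ of the sphere on which $v_n$ concentrates blows up, which is exactly what lets us reduce the problem, in the recentered variables $(x',r)$ with $r = |x''|$, to a flat limit equation posed in $\mathbb{R}^{N-k}$.

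The next step is to translate and pass to the limit. Recentering $v_n$ at $P_n^1$ (shifting the radial variable by $|P_n^{1''}|$), call the resulting function $\tilde v_n$. For $y$ in a fixed compact set one has $\epsilon_n(y + P_n^1) \to P^1 \in \Omega$, so the recentered points lie in $\Omega_{\epsilon_n}$ for $n$ large, where the penalized nonlinearity $g_n$ coincides with $f$; moreover the weight satisfies $(|P_n^{1''}| + \sigma)^k = |P_n^{1''}|^k(1 + o(1))$ since $|P_n^{1''}| \to \infty$, so it becomes asymptotically constant. Using uniform bounds on $v_n$ and standard elliptic estimates, I would extract $\tilde v_n \to w$ in $C^2_{loc}(\mathbb{R}^{N-k})$, where $w$ solves $-\Delta w + V(P^1)w = f(w)$ in $\mathbb{R}^{N-k}$. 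Because $\tilde v_n(0) = v_n(P_n^1) \geq a$, we get $w^+ \neq 0$; and since $P_n^2 - P_n^1$ is bounded it converges (along a further subsequence) to some $Q$, where $w(Q) = \lim v_n(P_n^2) \leq -a$, so $w^- \neq 0$. Thus $w$ is a sign-changing solution of the limit problem.

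The decisive ingredient is a \emph{strict} energy inequality for $w$. Testing $I_{V(P^1)}'(w)$ against $w^\pm$ shows $w^\pm \in \mathcal{N}_{V(P^1)}$, and since $\nabla w^+\cdot\nabla w^- = 0$ and $F(w) = F(w^+) + F(w^-)$ one has the exact splitting $I_{V(P^1)}(w) = I_{V(P^1)}(w^+) + I_{V(P^1)}(w^-) \geq 2\mathcal{E}(V(P^1))$. This is strict: equality $I_{V(P^1)}(w^+) = \mathcal{E}(V(P^1))$ would make $w^+$ a minimizer on $\mathcal{N}_{V(P^1)}$, hence a nonnegative nontrivial solution of the limit equation, which by the strong maximum principle would be strictly positive everywhere, contradicting that $w^+$ vanishes on the nonempty open set $\{w < 0\}$. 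Therefore $I_{V(P^1)}(w) > 2\mathcal{E}(V(P^1))$.

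Finally I would convert this into a contradiction with Lemma \ref{lemma5.1}. Since both $w^+$ and $w^-$ are now captured by the single blow-up at $P_n^1$, a Fatou/lower-semicontinuity argument on the reduced energy, in which the weight $r^k$ contributes the factor $\epsilon_n^k|P_n^{1''}|^k \to |P^{1''}|^k$, gives
\[
\liminf_{n\to\infty} \epsilon_n^k d_n = \liminf_{n\to\infty}\epsilon_n^k I_n(v_n) \geq \omega_k |P^{1''}|^k I_{V(P^1)}(w) > 2\omega_k |P^{1''}|^k \mathcal{E}(V(P^1)) = 2\omega_k \mathcal{M}(P^1) \geq 2\omega_k \mathcal{M}_0,
\]
contradicting the upper bound $\limsup_{n\to\infty}\epsilon_n^k d_n \leq 2\omega_k\mathcal{M}_0$ of Lemma \ref{lemma5.1}; hence $|P_n^1 - P_n^2| \to +\infty$. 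I expect the main obstacle to be precisely the lower bound in this display: one must rigorously justify that rescaling by $\epsilon_n^k$ produces exactly the geometric weight $|P^{1''}|^k$ and that no energy is lost, controlling the $r^k$-weighted integrals over the unbounded radial variable and recovering the full limit energy $I_{V(P^1)}(w)$ for both signs at once. Securing the uniform bounds needed for $C^2_{loc}$ convergence is the other technical point, but it is routine once the penalization keeps the relevant region inside $\Omega_{\epsilon_n}$.
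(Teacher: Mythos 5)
Your proposal is correct and is essentially the proof the paper intends: the paper justifies Lemma \ref{lemma5.3} only by citing the argument of \cite{Soares1}, which is exactly your scheme --- assume $|P_n^1-P_n^2|$ stays bounded, blow up at $P_n^1$ (using $|P^{1\prime\prime}|>0$, hence $|P_n^{1\prime\prime}|\to\infty$, to flatten the radial weight), obtain a sign-changing solution $w$ of the autonomous limit problem in $\mathbb{R}^{N-k}$ with $I_{V(P^1)}(w)>2\mathcal{E}(V(P^1))$ because Nehari minimizers are sign-definite, and contradict the upper bound of Lemma \ref{lemma5.1}. The one technical point you flag (recovering the full limit energy with no loss in the $\liminf$) is the same issue the paper disposes of in Proposition \ref{prop5.6} via the calculations cited from \cite{Pimenta}, so your reconstruction follows the paper's route.
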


\begin{lemma}
Let $y_n = (y_n',y_n'') \subset \mathbb{R}^N$ be a sequence such that $\epsilon_ny_n \to (\bar{y}',\bar{y}'') \in \overline{\Omega}$ as $n \to \infty$. Denoting $\tilde{v}_n(x',r):= v_n(x', x'')$ where $|x''| = r$, let us define $\tilde{w}_n: \mathbb{R}^{N-k-1}\times [-|y_n|,+\infty) \to \mathbb{R}$ by
$$\tilde{w}_n(x',r):= \tilde{v}_n(x',r + |y_n''|).$$
Then there exists $\tilde{w} \in H^1(\mathbb{R}^{N-k})$ such that $\tilde{w}_n \to \tilde{w}$ in $C^2_{loc}(\mathbb{R}^{N-k})$ and $\tilde{w}$ satisfies the limit problem
\begin{equation}
-\Delta \tilde{w} + V(\bar{y}',|\bar{y}''|)\tilde{w} = \tilde{g}_n(x',r, \tilde{w}) \quad \mbox{in $\mathbb{R}^{N-k}$,}
\label{Plim}
\end{equation}
where $\tilde{g}_n(x',r,s) := \chi(x',r)f(s) + (1-\chi(x',r))\tilde{f}_{\epsilon_n}(s)$ and $\chi(x',r)=\lim_{n\to\infty}\chi_\Omega(\epsilon_n x' + \epsilon_ny_n',\epsilon_n r + \epsilon_n|y_n''|)$.
\label{lemma5.4}
\end{lemma}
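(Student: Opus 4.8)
The plan is to view the functions $\tilde w_n$ as solutions of a sequence of elliptic equations on the expanding domains $\mathbb{R}^{N-k-1}\times[-|y_n''|,+\infty)$, to establish uniform interior a priori estimates, and then to pass to the limit. Writing (\ref{P2}) in cylindrical coordinates, the profile $\tilde v_n(x',r)=v_n(x',x'')$ (with $|x''|=r$) solves
$$-\Delta_{x'}\tilde v_n-\partial_{rr}\tilde v_n-\frac{k}{r}\partial_r\tilde v_n+V(\epsilon_n x',\epsilon_n r)\tilde v_n=g_{\epsilon_n}(\epsilon_n x',\epsilon_n r,\tilde v_n),$$
since the radial Laplacian on the $(k+1)$-dimensional factor $\mathcal{H}^\perp$ is $\partial_{rr}+\frac{k}{r}\partial_r$. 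Translating by $(y_n',|y_n''|)$ turns this into an equation for $\tilde w_n$ in which the drift coefficient becomes $k/(r+|y_n''|)$, the potential becomes $V(\epsilon_n(x'+y_n'),\epsilon_n(r+|y_n''|))$ and the cut-off becomes $\chi_\Omega(\epsilon_n(x'+y_n'),\epsilon_n(r+|y_n''|))$. Because $\epsilon_n|y_n''|\to|\bar y''|>0$ (here $|\bar y''|>0$ since $\overline\Omega$ is bounded away from $\{x''=0\}$ by $(\mathcal{M}_1)$) and $\epsilon_n\to0$, we get $|y_n''|\to+\infty$; hence the drift tends to $0$ locally uniformly, the potential tends to the constant $V(\bar y',|\bar y''|)$, and the cut-off tends to $\chi$ a.e. This vanishing of the curvature term is exactly why the limiting operator is the flat Laplacian $-\Delta$ and (\ref{Plim}) is posed on $\mathbb{R}^{N-k}$.

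First I would derive uniform local energy bounds from Lemma \ref{lemma5.1}. In cylindrical coordinates the bound $\epsilon_n^k\|v_n\|_n^2\le C$ reads
$$\omega_k\int_{\mathbb{R}^{N-k-1}}\int_{-|y_n''|}^{+\infty}\big(\epsilon_n(r+|y_n''|)\big)^k\big(|\nabla\tilde w_n|^2+V(\epsilon_n(x'+y_n'),\epsilon_n(r+|y_n''|))\tilde w_n^2\big)\,dr\,dx'\le C.$$
On a fixed compact $K\subset\mathbb{R}^{N-k}$ the weight $\big(\epsilon_n(r+|y_n''|)\big)^k$ converges uniformly to $|\bar y''|^k>0$, so it is $\ge\frac12|\bar y''|^k$ for $n$ large; together with $(V_1)$ this yields a bound for $\tilde w_n$ in $H^1(K)$ uniform in $n$. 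Passing to a subsequence, $\tilde w_n\rightharpoonup\tilde w$ in $H^1_{loc}(\mathbb{R}^{N-k})$, and exhausting $\mathbb{R}^{N-k}$ by compacts together with weak lower semicontinuity gives $\tilde w\in H^1(\mathbb{R}^{N-k})$.

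The core of the argument, and its main difficulty, is to improve this to convergence in $C^2_{loc}$, and here the subcriticality in $(f_2)$ (subcritical for the dimension $N-k$, not merely for $N$ as in $(g_2)$) is decisive. For $n$ large any fixed compact $K$ lies in the interior of the domain and the operators $-\Delta-\frac{k}{r+|y_n''|}\partial_r$ are uniformly elliptic with coefficients bounded uniformly in $n$ on $K$. Starting from the uniform $H^1(K)$ bound, Sobolev embedding together with the subcritical growth from $(g_2)$ and the bound $|\tilde f_{\epsilon_n}(s)|\le\epsilon_n^\tau|s|$ in (\ref{penalization1}) allows a standard elliptic bootstrap: interior $W^{2,q}$ estimates, iterated finitely many times, bound $\tilde w_n$ in $W^{2,q}_{loc}$ for every $q<\infty$, hence in $C^{1,\alpha}_{loc}$, with constants independent of $n$. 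The main obstacle is exactly this uniformity in $n$ as the domain grows and the operator varies, which is secured by the local uniform ellipticity just noted. When $\chi$ is locally constant on $K$ --- the case for the interior blow-up centers to which the lemma is applied, so that the right-hand side is locally $C^\alpha$ since $f\in C^1$ --- interior Schauder estimates upgrade this to a uniform $C^{2,\alpha}_{loc}$ bound.

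By Arzel\`a--Ascoli the uniform $C^{2,\alpha}_{loc}$ bound gives, along a further subsequence, $\tilde w_n\to\tilde w$ in $C^2_{loc}(\mathbb{R}^{N-k})$, the limit agreeing with the weak $H^1$ limit above. I would then pass to the limit termwise in the equation for $\tilde w_n$: the drift term $\frac{k}{r+|y_n''|}\partial_r\tilde w_n\to0$, the potential term tends to $V(\bar y',|\bar y''|)\tilde w$, and, using the local uniform convergence $\tilde w_n\to\tilde w$ together with $\chi_\Omega(\cdots)\to\chi$ a.e. and $|\tilde f_{\epsilon_n}(s)|\le\epsilon_n^\tau|s|\to0$, the right-hand side converges to $\chi(x',r)f(\tilde w)$, i.e. to the limit of $\tilde g_n(x',r,\tilde w)$. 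Hence $\tilde w$ solves (\ref{Plim}), completing the proof.
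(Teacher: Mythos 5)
Your proposal is correct and follows essentially the same route as the paper's sketch: write the translated equation with the vanishing drift term $k/(r+|y_n''|)$, extract uniform $H^1$ bounds from Lemma \ref{lemma5.1}, apply interior elliptic estimates uniformly in $n$, and pass to the limit in $C^2_{loc}(\mathbb{R}^{N-k})$. If anything you are more careful than the paper: you track the Jacobian weight $\left(\epsilon_n(r+|y_n''|)\right)^k$ and justify $|\bar y''|>0$ via $(\mathcal{M}_1)$ (the paper's global estimate (\ref{estimatelemma5.4}) silently ignores the degeneration of this weight near $r=-|y_n''|$), and you supply the Schauder step needed to upgrade $W^{2,q}_{loc}$ bounds to genuine $C^2_{loc}$ convergence, which the paper's appeal to ``classical elliptic estimates'' (\ref{bootstrap}) glosses over.
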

\begin{proof}

The proof is analogous to \cite{Bonheure1}[Lemma 4.3] but we sketch it here for the sake of completeness. 

Note that $\tilde{w}_n$ satisfies the following problem
\begin{equation}
-\Delta\tilde{w}_n- \frac{k}{(r + |y_n''|)}\frac{\tilde{w}_n}{\partial r} + V\hspace{-0.1cm}(\epsilon_n x' + \epsilon_n y_n',\epsilon_n r+ \epsilon_n|y_n''|)\tilde{w}_n \hspace{-0.1cm} = \hspace{-0.1cm} g_n(\epsilon_n x' + \epsilon_n y_n',\hspace{-0.1cm} \epsilon_n r+ \epsilon_n|y_n''|, \hspace{-0.1cm}\tilde{w}_n),
\label{PRtrans}
\end{equation}
in $\mathbb{R}^{N-k-1}\times [-|y_n|,+\infty)$.

By Lemma \ref{lemma5.1} it follows that 
\begin{equation}
\int_{\mathbb{R}^{N-k-1}} \int_{-|y_n''|}^{+\infty}\left(|\nabla \tilde{w}_n|^2 + V(\epsilon_n x' + \epsilon_n y_n',\epsilon_n r + \epsilon_n|y_n''|)\tilde{w}_n^2\right)dr dx'\leq C,
\label{estimatelemma5.4}
\end{equation}
uniformly in $n$ and then, for some $\tilde{w}\in H^1(\mathbb{R}^{N-k})$,
\begin{equation}
\tilde{w_n} \rightharpoonup \tilde{w} \quad \mbox{in $H^1(\mathbb{R}^{N-k})$.}
\label{convergencelemma5.4}
\end{equation}

By choosing a sequence $R_n \to \infty$ such that $\epsilon_n R_n \to 0$ and considering a smooth cut-off function in $\mathbb{R}^{N-k}$, $\eta_R$ such that $0 \leq \eta_R \leq 1$, $\eta_R(z) = 1$ if $|z| \leq \frac{R}{2}$ and $\eta_R(z) = 0$ if $|z| > R$, and $\|\nabla \eta_R\|_\infty \leq \frac{C}{R}$, it can be proved using (\ref{estimatelemma5.4}) that $\overline{w}_n(z) := \eta_{R_n}(z)\tilde{w}_n(z)$ is bounded in $H^1(\mathbb{R}^{N-k})$, uniformly in $n$. 

Since $\overline{w}_n$ satisfies (\ref{PRtrans}) in $B(0,R_n)$, it follows by classical elliptic estimates that 
\begin{equation}
\|\overline{w}_n\|_{W^{2,q}(B(0,R))} \leq C
\label{bootstrap}
\end{equation}
for sufficiently large $n\in \mathbb{N}$, where $R > 0$ is fixed.

By (\ref{convergencelemma5.4}) and (\ref{bootstrap}) it follows that $\tilde{w}_n \to \tilde{w}$ in $C^2_{loc}(\mathbb{R}^{N-k})$ and that $\tilde{w}$ satisfies (\ref{Plim}).
\end{proof}

Since the concentration set is expected to be a sphere in $\mathbb{R}^N$, it is natural to introduce the distance between two $k-$dimensional spheres in $\mathbb{R}^N$, which gives rise to neighborhoods in which we want to estimate the mass of solutions. For $x=(x',x''),y=(y',y'') \in \mathbb{R}^N$, let
\begin{equation}
d_k(x,y) = \sqrt{(x'-y')^2 + (|x''| - |y''|)^2},
\label{distancedk}
\end{equation}
which corresponds to the distance between $k-$dimensional spheres centered at the origin, parallel to $\mathcal{H}^\perp$ and of radius $|x''|$ and $|y''|$, respectively. According to this distance, the balls are given by
$$B_k(x,r) = \{y \in \mathbb{R}^N; \, d_k(x,y) < r\}.$$

From now on, for $\Lambda \subset \mathbb{R}^N$ we denote 
$$I_{n,\Lambda}(v) = \frac{1}{2}\int_\Lambda \left(|\nabla v|^2 + V(\epsilon_n x)v^2\right)dx - \int_\Lambda G_n(\epsilon_n x,v)dx.$$

The following is the main step in proving the concentration result.

\begin{proposition}
Suppose all the assumptions of Theorem \ref{theorem1.1} to hold. Then,
\begin{itemize}
\item [$i)$] $\displaystyle \lim_{n \to \infty} \epsilon_n^kd_n = 2\omega_k\inf_\Omega \mathcal{M}$.\\
\item [$ii)$] $\displaystyle \lim_{n \to \infty} \mathcal{M}(\epsilon_n P_n^i) = \inf_\Omega \mathcal{M}$, $i \in \{1,2\}$.\\
\end{itemize}
\label{prop5.6}
\end{proposition}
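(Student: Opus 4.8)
The plan is to sandwich $\epsilon_n^k d_n$ between the upper bound already furnished by Lemma \ref{lemma5.1} and a matching lower bound extracted from the local asymptotic profile of $v_n$. First I would record the splitting of the functional: since the supports of $v_n^+$ and $v_n^-$ are essentially disjoint, the cross terms in $\|\cdot\|_n$ and in $G_n$ vanish, so $d_n = I_n(v_n) = I_n(v_n^+) + I_n(v_n^-)$; moreover the identities $I_n'(v_n)v_n^{\pm}=0$ reduce to $I_n'(v_n^{\pm})v_n^{\pm}=0$, so $v_n^{\pm}\in\mathcal{N}_{\epsilon_n}$. It therefore suffices to bound each of $\epsilon_n^k I_n(v_n^{\pm})$ from below by $\omega_k\,\mathcal{M}(P^i)$.

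For a fixed sign, set $y_n=P_n^i$. By Lemma \ref{lemma5.2} we have $P_n^i\in\Omega_{\epsilon_n}$ and $|v_n(P_n^i)|\geq a$, while by (\ref{Pi}) $\epsilon_n P_n^i\to P^i\in\overline{\Omega}$. Lemma \ref{lemma5.4} then gives a limit $\tilde w^i\in H^1(\mathbb{R}^{N-k})$ with $\tilde w_n\to\tilde w^i$ in $C^2_{loc}$; since $|\tilde w^i(0)|\geq a$, the limit is nontrivial. Granting that the blow-up sees the unpenalized nonlinearity (true when $P^i$ lies in the interior of $\Omega$, since there $\chi_\Omega\equiv 1$ near $P_n^i$), $\tilde w^i$ solves $-\Delta w + V(P^i)w=f(w)$ and hence belongs to $\mathcal{N}_{V(P^i)}$. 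Next I would rewrite $I_n(v_n^{\pm})$ in cylindrical coordinates and center the radial variable by $r=\sigma+|P_n^{i\prime\prime}|$; using $v_n^{\pm}\in\mathcal{N}_{\epsilon_n}$ I would replace $I_n(v_n^{\pm})$ by the integral of the density $\tfrac12 g_n(\epsilon_n x,v_n^{\pm})v_n^{\pm}-G_n(\epsilon_n x,v_n^{\pm})$, which is nonnegative by $(g_3)$. The weight then reads $\epsilon_n^k r^k=(\epsilon_n\sigma+|\epsilon_n P_n^{i\prime\prime}|)^k\to|P^{i\prime\prime}|^k$ pointwise, and $(\mathcal{M}_1)$ keeps $\overline{\Omega}$ bounded away from $\{x''=0\}$ (where $\mathcal{M}$ vanishes, contradicting $\inf_{\overline\Omega}\mathcal{M}=\mathcal{M}_0>0$), so $|P^{i\prime\prime}|>0$. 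Fatou's lemma, together with the $C^2_{loc}$ convergence, then yields
$$\liminf_{n\to\infty}\epsilon_n^k I_n(v_n^{\pm})\ \geq\ \omega_k\,|P^{i\prime\prime}|^k\,I_{V(P^i)}(\tilde w^i)\ \geq\ \omega_k\,|P^{i\prime\prime}|^k\,\mathcal{E}(V(P^i))\ =\ \omega_k\,\mathcal{M}(P^i),$$
the middle inequality holding because $\tilde w^i\in\mathcal{N}_{V(P^i)}$.

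Summing over the two signs and combining with Lemma \ref{lemma5.1} gives the chain
$$\omega_k\big(\mathcal{M}(P^1)+\mathcal{M}(P^2)\big)\ \leq\ \liminf_{n\to\infty}\epsilon_n^k d_n\ \leq\ \limsup_{n\to\infty}\epsilon_n^k d_n\ \leq\ 2\omega_k\inf_\Omega\mathcal{M}.$$
Since $P^i\in\overline{\Omega}$, each $\mathcal{M}(P^i)\geq\inf_{\overline\Omega}\mathcal{M}=\inf_\Omega\mathcal{M}$ (the infimum being attained inside $\Omega$ by $(\mathcal{M}_1)$), so the chain collapses, forcing $\mathcal{M}(P^1)=\mathcal{M}(P^2)=\inf_\Omega\mathcal{M}$ and $\lim_{n\to\infty}\epsilon_n^k d_n=2\omega_k\inf_\Omega\mathcal{M}$, which is $i)$. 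In particular $\mathcal{M}(P^i)=\inf_\Omega\mathcal{M}<\inf_{\partial\Omega}\mathcal{M}$ forces $P^i\in\Omega$, and the continuity of $\mathcal{M}$ with $\epsilon_n P_n^i\to P^i$ gives $\lim_{n\to\infty}\mathcal{M}(\epsilon_n P_n^i)=\mathcal{M}(P^i)=\inf_\Omega\mathcal{M}$, which is $ii)$.

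The hard part will be the step identifying the limiting energy with the clean ground-state level $\mathcal{E}(V(P^i))$, that is, controlling the penalization inside the blow-up. If a priori $P^i\in\partial\Omega$, the limit nonlinearity produced by Lemma \ref{lemma5.4} is a mixture of $f$ and the vanishing penalized term, so one must argue either that the concentrating mass of $\tilde w^i$ sits where $\chi\equiv 1$, or use the strict gap $\mathcal{M}_0<\inf_{\partial\Omega}\mathcal{M}$ in $(\mathcal{M}_1)$ to exclude the boundary a posteriori through the same sandwich. This is precisely the point where the lower-dimensional weight $r^k$ and the cylindrical symmetry depart from the standard spike arguments; the careful bookkeeping of $(\epsilon_n\sigma+|\epsilon_n P_n^{i\prime\prime}|)^k$, the vanishing of the cross terms, and the verification that $\tilde w^i$ is a genuine Nehari competitor for $I_{V(P^i)}$ are the remaining delicate but routine ingredients.
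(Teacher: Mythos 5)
Your strategy (sandwich $\epsilon_n^k d_n$ between Lemma \ref{lemma5.1} and a blow-up lower bound, then collapse the chain by $(\mathcal{M}_1)$ and continuity of $\mathcal{M}$) is the paper's, but your decomposition is genuinely different. The paper never splits $v_n$ into $v_n^{\pm}$: it localizes in space, estimating $\epsilon_n^k I_{n,B_k(P_n^i,R)}(v_n)$ on the two balls --- which are disjoint for large $n$ only because of Lemma \ref{lemma5.3}, $|P_n^1-P_n^2|\to\infty$ --- and then needs a separate estimate $\liminf_{n\to\infty} I_{n,\mathbb{R}^N\backslash B_{n,R}}(v_n)\geq -\eta$ on the complement, imported from \cite{Pimenta}. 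Your splitting $d_n=I_n(v_n^+)+I_n(v_n^-)$ with $v_n^{\pm}\in\mathcal{N}_{\epsilon_n}$ (the cross terms do vanish exactly, since $v_n^+v_n^-=0$ and $\nabla v_n^+\cdot\nabla v_n^-=0$ a.e.), combined with the global Fatou argument on the nonnegative density $\tfrac12 g_n(\epsilon_n x,v_n^{\pm})v_n^{\pm}-G_n(\epsilon_n x,v_n^{\pm})$, separates the two bumps by sign rather than by distance: it makes Lemma \ref{lemma5.3} and the complement estimate unnecessary, which is a real simplification. The endgame for ii) is also the same (the paper phrases it as a contradiction, but it is the same collapse). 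One bookkeeping point: blowing up $v_n^{\pm}$ around $P_n^i$ produces $(\tilde w^i)^{\pm}$, not $\tilde w^i$, as the object entering Fatou; it is $(\tilde w^i)^+$ (for $i=1$) and $(\tilde w^i)^-$ (for $i=2$) that must be Nehari competitors, which is fine since $|\tilde w^i(0)|\geq a$ with the correct sign.

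The step you flag at the end is, however, a genuine gap, and of your two proposed repairs the a posteriori one is circular: to run the sandwich you already need $\liminf_{n\to\infty}\epsilon_n^k I_n(v_n^{\pm})\geq\omega_k\mathcal{M}(P^i)$, which is exactly what is unproved when $P^i\in\partial\Omega$ and the limit nonlinearity is the mixture $\tilde g=\chi f+(1-\chi)\tilde f_{\epsilon_n}$. The correct repair is the comparison of levels from \cite{DelPino}, and it works with no information on the location of $P^i$. Keep the mixed nonlinearity throughout: Fatou gives $\liminf_{n\to\infty}\epsilon_n^k I_n(v_n^{\pm})\geq \omega_k|{P^i}''|^k\int\bigl(\tfrac12\tilde g(\cdot,w)w-\tilde G(\cdot,w)\bigr)$ with $w=(\tilde w^i)^{\pm}$, and this integral equals $I^{\tilde g}(w):=\tfrac12\int\bigl(|\nabla w|^2+V(P^i)w^2\bigr)-\int\tilde G(\cdot,w)$ because $w$ lies in the Nehari set of $I^{\tilde g}$ (test the limit equation against $w$). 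Now pick $t^*>0$ with $t^*w\in\mathcal{N}_{V(P^i)}$, possible by $(f_1)$--$(f_4)$ since $w\neq 0$; using $\tilde G\leq F$ pointwise and the fact that $t=1$ is the global maximum of $t\mapsto I^{\tilde g}(tw)$ (because $w$ is a Nehari point of $I^{\tilde g}$ and $s\mapsto\tilde g(x,s)/s$ is nondecreasing for $s>0$, by (\ref{penalization3}) and $(f_4)$), one gets
\begin{equation*}
\mathcal{E}(V(P^i))\ \leq\ I_{V(P^i)}(t^*w)\ \leq\ I^{\tilde g}(t^*w)\ \leq\ I^{\tilde g}(w),
\end{equation*}
which is the bound $\omega_k\mathcal{M}(P^i)$ you need. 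Note that swapping $\tilde G$ for $F$ too early, i.e.\ trying to show $I_{V(P^i)}(w)\geq\mathcal{E}(V(P^i))$ directly, fails: since $\tilde g(s)s\leq f(s)s$, one has $I_{V(P^i)}'(w)w\leq 0$, so $w$ sits at or beyond $\mathcal{N}_{V(P^i)}$ and its clean energy can a priori lie below $\mathcal{E}(V(P^i))$. (In fairness, the paper's own write-up makes precisely this premature swap in the displayed chain for $\epsilon_n^k I_{n,B_k(P^i,R)}(v_n)$ and then asserts the bound, deferring the justification to \cite{DelPino}; the argument above is what fills that hole in either version.)
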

\begin{proof}
Let us get started with $i)$.
By Lemma \ref{lemma5.1} it follows that
\begin{equation}
\limsup_{n \to \infty} \epsilon_n^k d_n \leq 2\omega_k\inf_\Omega \mathcal{M}.
\label{eq15}
\end{equation}

In order to prove that 
\begin{equation}
\liminf_{n \to \infty} \epsilon_n^k d_n \geq 2\omega_k\inf_\Omega \mathcal{M},
\label{eq15.1}
\end{equation}
we use some ideas of \cite{DelPino}[Lemma 2.2].

By Lemmas \ref{lemma5.2}  and \ref{lemma5.4} it follows that $\tilde{w}^i_n(x',r):= \tilde{v}_n(x',r + P_n^i) \to \tilde{w}_i$ in $C^2_{loc}(\mathbb{R}^{N-k})$, where $\tilde{w}_i \neq 0$ and satisfies (\ref{Plim}) with $(\bar{y}',\bar{y}'') = P^i$.

For each $R > 0$ and up to a subsequence in $n$, Lemma \ref{lemma5.4} with calculations similar to which have resulted in (\ref{eq12}) implies that
\begin{eqnarray*}
\epsilon_n^k I_{n,B_k(P^i,R)}(v_n) & = & \omega_k|{P^i}''|^k \left(\frac{1}{2}\int_{B_R(0)} \left(|\nabla \tilde{w}_i|^2 + V(P^i)\tilde{w}_i^2\right)dz \right. - \\
& & \left. \int_{B_R(0)} \tilde{G}(r,\tilde{w}_i)dz\right) + o_n(1)\\
& \geq & \omega_k|{P^i}''|^k \left(\frac{1}{2}\int_{B_R(0)} \left({|{\nabla \tilde{w}_i}|}^2 + V(P^i)\tilde{w}_i^2\right)dz - \right. \\
& & \left. \int_{B_R(0)} F(\tilde{w}_i)dz \right) + o_n(1).
\end{eqnarray*}
Since $\tilde{w}_i \in H^1(\mathbb{R}^{N-k})$, it follows that for a given $\eta > 0$, there exists $R > 0$ such that
$$\liminf_{n \to \infty}\epsilon_n^k I_{n,B_k(y_n,R)}(v_n) \geq \omega_k\mathcal{M}(P^i) - \eta.$$

Taking into account Lemma \ref{lemma5.3}, it follows that for $n$ large enough $B_k(P_n^1,R)$ and $B_k(P_n^2,R)$ are disjoint and then 
$$\epsilon_n^kI_{n}(v_n) = \epsilon_n^k I_{n,B_k(P_n^1,R)}(v_n) + \epsilon_n^k I_{n,B_k(P_n^2,R)}(v_n) + I_{n,\mathbb{R}^N\backslash B_{n,R}}(v_n),$$
where $B_{n,R}:=B_k(P_n^1,R) \cup B_k(P_n^2,R)$.

Now, calculations similar to (2.21) in \cite{Pimenta} implies that $\liminf_{n \to \infty}I_{n,\mathbb{R}^N\backslash B_{n,R}}(v_n) \geq -\eta$ for $R > 0$ sufficiently large and then 

\begin{eqnarray*}
\liminf_{n\to\infty} \epsilon_n^kI_n(v_n) & \geq & \omega_k \left(\mathcal{M}(P^1) + \mathcal{M}(P^2)\right) -3\eta\\
& \geq & 2\omega_k\inf_\Omega \mathcal{M} - 3\eta.
\end{eqnarray*}
Since the last inequality holds for all $\eta > 0$, (\ref{eq15.1}) is proved.

To prove $ii)$ let us suppose by contradiction that 
$$\mathcal{M}(P^i) > \inf_\Omega \mathcal{M},  \quad i \in \{1,2\}.$$
where $P^i$ is given by (\ref{Pi}).
Just by arguing as in the first item, one can see that
$$\liminf_{n\to\infty} \epsilon_n^kI_n(v_n)  \geq  \omega_k \left(\mathcal{M}(P^1) + \mathcal{M}(P^2)\right) > 2\omega_k\inf_\Omega \mathcal{M},$$
which contradicts the statement of Lemma \ref{lemma5.1}. This contradiction proves the proposition.
\end{proof}

\section{Proof of Theorem \ref{theorem1.1}} 

Let $v_n$ be as in the beginning of Section 4 and $u_n(x):=v_n(\epsilon_n^{-1}x)$.

\begin{proposition}
$$\lim_{n \to \infty}\|v_n\|_{L^\infty(\Omega_n\backslash\left(B_k(P_n^1,R) \cup B_k(P_n^2,R))\right)} = 0.$$
\label{prop5.1}
\end{proposition}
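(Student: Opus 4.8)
The plan is to argue by contradiction, taking advantage of the fact that Proposition~\ref{prop5.6} has already accounted for the \emph{entire} limiting energy $2\omega_k\inf_\Omega\mathcal{M}$ in terms of the two bumps sitting at $P_n^1$ and $P_n^2$, leaving no room for extra mass elsewhere. So suppose the conclusion fails: then, for $R$ fixed (and, as will become clear, taken large), there are $\delta>0$, a subsequence, and points $y_n\in\Omega_n\setminus\big(B_k(P_n^1,R)\cup B_k(P_n^2,R)\big)$ with $|v_n(y_n)|\ge\delta$. Since $\epsilon_n y_n\in\Omega$ and $\overline\Omega$ is compact, we may assume $\epsilon_n y_n\to\bar y\in\overline\Omega$. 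Note that $|\bar y''|\ge c_0>0$: otherwise $\mathcal{M}(\bar y)=|\bar y''|^k\mathcal{E}(V(\bar y))=0$, contradicting $\mathcal{M}(\bar y)\ge\inf_\Omega\mathcal{M}>0$, which holds by continuity since $\bar y$ is a limit of points of $\Omega$.

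First I would reduce to the situation $d_k(y_n,P_n^i)\to+\infty$ for both $i=1,2$. If instead $d_k(y_n,P_n^i)$ stayed bounded along a subsequence for some $i$, then, recalling $\epsilon_nP_n^i\to P^i\in\Omega$ by (\ref{Pi}) and Proposition~\ref{prop5.6}$(ii)$, Lemma~\ref{lemma5.4} applied at $P_n^i$ gives $C^2_{loc}$ convergence of the blow-up to a nontrivial solution $\tilde w_i$ of $-\Delta w+V(P^i)w=f(w)$ (nontrivial because $\tilde w_i(0)=\lim v_n(P_n^i)\ge a$ by Lemma~\ref{lemma5.2}). In these coordinates $y_n$ corresponds to $z_n\to\bar z$ with $R\le|\bar z|<\infty$, so $\delta\le|v_n(y_n)|\to|\tilde w_i(\bar z)|$. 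Since such solutions enjoy a uniform exponential decay $|\tilde w_i(z)|\le Ce^{-c|z|}$, with $C,c$ depending only on $V_0$, $\sup_{\overline\Omega}V$ and $f$ (through a uniform $L^\infty$ bound and comparison once $f(w)/w<V_0/2$), choosing $R$ large forces $Ce^{-cR}<\delta$, a contradiction. This is exactly where the hypothesis that the excised balls have large radius is used.

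In the remaining case $d_k(y_n,P_n^i)\to+\infty$, I would blow up around $y_n$ itself. By Lemma~\ref{lemma5.4} the translated sequence converges in $C^2_{loc}(\mathbb{R}^{N-k})$ to some $\tilde w$ solving the corresponding limit equation, and $\tilde w\neq0$ since $|\tilde w(0)|=\lim|v_n(y_n)|\ge\delta$. A nontrivial solution of this (full, or penalized/mixed) limit problem lies on the associated Nehari set and hence has strictly positive energy $E(\tilde w)\ge c_1>0$. Consequently, repeating the change of variables and the $r^k$-weighted computation from the proof of Proposition~\ref{prop5.6} and using $|\bar y''|\ge c_0>0$, for $\rho$ large one gets
$$\liminf_{n\to\infty}\epsilon_n^k I_{n,B_k(y_n,\rho)}(v_n)\ge \omega_k|\bar y''|^k E(\tilde w)\ge \omega_k c_2>0.$$

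Finally, since $d_k(y_n,P_n^i)\to+\infty$, for $n$ large the three balls $B_k(P_n^1,R)$, $B_k(P_n^2,R)$ and $B_k(y_n,\rho)$ are pairwise disjoint. Splitting the energy as in Proposition~\ref{prop5.6}, using $\liminf_n\epsilon_n^kI_{n,B_k(P_n^i,R)}(v_n)\ge\omega_k\mathcal{M}(P^i)-\eta\ge\omega_k\inf_\Omega\mathcal{M}-\eta$ together with the lower bound $\epsilon_n^kI_{n,\mathbb{R}^N\setminus(\text{three balls})}(v_n)\ge-\eta$ (the Pimenta-type estimate invoked there), gives
$$\liminf_{n\to\infty}\epsilon_n^k I_n(v_n)\ge 2\omega_k\inf_\Omega\mathcal{M}+\omega_k c_2-3\eta.$$
Choosing $\eta$ small, this strictly exceeds $2\omega_k\inf_\Omega\mathcal{M}=\lim_n\epsilon_n^kd_n=\lim_n\epsilon_n^k I_n(v_n)$, contradicting Proposition~\ref{prop5.6}$(i)$. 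The main obstacle I anticipate is the case $\bar y\in\partial\Omega$: there the cut-off limit $\chi$ need not be identically $1$, so the limit equation is a mixed/penalized one, and one must still certify $E(\tilde w)>0$; this, together with making the weighted energy splitting over $d_k$-balls rigorous and securing exponential-decay constants uniform in $i$ and $n$, is the delicate part.
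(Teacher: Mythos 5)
Your proof is correct, and its decisive step --- blowing up at the spurious point $y_n$, extracting a nontrivial limit profile via Lemma~\ref{lemma5.4}, and showing that the resulting third bump carries energy incompatible with the two-bump accounting --- is exactly the paper's (very terse) argument. The differences are in the framing, and they mostly favor you. The paper negates the statement with radii $R_n\to\infty$, so only your ``far'' case ever arises there, and it then asserts $\epsilon_n^k I_n(v_n)\ge 3\omega_k\inf_\Omega\mathcal{M}$, contradicting the upper bound of Lemma~\ref{lemma5.1}; this stronger assertion asks the third bump to contribute a full $\omega_k\inf_\Omega\mathcal{M}$, i.e.\ a comparison of the energy of a solution of the (possibly mixed, when $\bar y\in\partial\Omega$) limit problem with $\mathcal{E}(V(\bar y))$. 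You instead contradict the exact limit in Proposition~\ref{prop5.6}$(i)$ using only \emph{positivity} of the extra energy, which is weaker and fully sufficient; moreover the certification you flag as delicate is one line: testing the limit equation against $\tilde w$ and using $(f_3)$ gives $E(\tilde w)=E(\tilde w)-\tfrac12 E'(\tilde w)\tilde w\ge\left(\tfrac12-\tfrac1\theta\right)\|\tilde w\|^2>0$ also in the mixed case, while if $\chi\equiv 0$ the limit equation forces $\tilde w=0$, already a contradiction. Your ``near'' case (bounded $d_k(y_n,P_n^i)$, handled by exponential decay of the profiles at $P_n^i$ and by taking $R$ large) has no counterpart in the paper, but it is precisely what reconciles the proposition's literal fixed-$R$ statement with what is true: for fixed $R$ the supremum over the annuli around $P_n^i$ converges to a supremum of the nontrivial profile $\tilde w_i$ outside $B_R$, which need not vanish, so the correct reading is $\lim_{R\to\infty}\limsup_n(\cdot)=0$ --- equivalently the $R_n\to\infty$ form that the paper actually proves and later uses in (\ref{boundarydecay}). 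Finally, your observation that $|\bar y''|\ge c_0>0$ (forced by $(\mathcal{M}_1)$, and needed both for Lemma~\ref{lemma5.4}'s domain to exhaust $\mathbb{R}^{N-k}$ and for the $r^k$-weighted computation not to degenerate) is a genuine point the paper glosses over.
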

\begin{proof}
The prove follows by contradiction. Let us suppose that there exist $\eta > 0$ and sequences $R_n \to \infty$ and $(y_n) \subset \Omega_n\backslash\left(B_k(P_n^1,R_n) \cup B_k(P_n^1,R_n)\right)$ such that 
$$|v_n(y_n)| \geq \eta.$$
Since $\epsilon_n y_n \in \Omega$, it follows that $\epsilon_n y_n \to \bar{y} \in \Omega$ up to a subsequence.

Then following the arguments in Proposition \ref{prop5.6} it is possible to show that
$$\epsilon_n^k d_n = I_n(v_n) \geq 3\omega_k \inf_{\Omega}\mathcal{M},$$
which contradicts Lemma \ref{lemma5.1}.
\end{proof}

By standard elliptic regularity theory it is possible to show that $v_n \in C^2(\mathbb{R}^N)$. Then, by continuity, Proposition \ref{prop5.1} implies that 
\begin{equation}
\|v_n\|_{L^\infty{\partial(\left(B_k(P_n^1,R_n) \cup B_k(P_n^1,R_n)\right))}} = o_n(1)
\label{boundarydecay}
\end{equation}

In order to prove the exponential decay with respect to $\epsilon_n$ of the functions $u_\epsilon$, let us take as a comparison function
$$W(x) = C(e^{-\beta d_k(x,P_n^1)} + e^{-\beta d_k(x,P_n^2)}),$$
defined in $\mathbb{R}^N \backslash \left(B_k(P_n^1,R) \cup B_k(P_n^1,R)\right)$, where $C > 0$ is to be choosen.
For sufficiently small $\beta > 0$ (independent of $\epsilon_n$), it follows that 
$$\left(-\Delta + V(\epsilon_nx) - \frac{g_n(\epsilon_x,v_n)}{v_n}\right)(W \, \pm\,  v_n) \geq 0 \quad \mbox{in $\mathbb{R}^N \backslash \left(B_k(P_n^1,R) \cup B_k(P_n^1,R)\right)$.}
$$
Then, by (\ref{boundarydecay}), for $x \in \partial \left(B_k(P_n^1,R) \cup B_k(P_n^1,R)\right)$
$$W(x) \pm v_n(x) = C2e^{-\beta R} \pm v_n \geq 0$$ for a sufficiently large constant $C > 0$ which does not depend on $n$. Hence Maximum Principle applies and 
$$|v_n| \leq W(x), \quad \mbox{on $\mathbb{R}^N \backslash \left(B_k(P_n^1,R) \cup B_k(P_n^1,R)\right)$.}$$

Then there exists a sufficiently large constant $C > 0$ such that 
$$|v_n(x)| \leq C\left(e^{-\beta d_k(x,P_n^1)} + e^{-\beta d_k(x,P_n^2)}\right),$$
for $x \in \mathbb{R}^N \backslash \left(B_k(P_n^1,R) \cup B_k(P_n^1,R)\right)$, which implies that
\begin{equation}
\displaystyle |u_n(x)| \leq C\left(e^{-\frac{\beta}{\epsilon_n} d_k(x,\epsilon_n P_n^1)} + e^{-\frac{\beta}{\epsilon_n} d_k(x,\epsilon_nP_n^2)}\right)
\label{decayestimate}
\end{equation}
for $x \in \mathbb{R}^N \backslash \left(B_k(\epsilon_n P_n^1,\epsilon_nR) \cup B_k(\epsilon_n P_n^1,\epsilon_n R)\right)$. In particular it holds that 
$$\|u_n\|_{L^\infty(\mathbb{R}^N\backslash\Omega)} \leq C e^\frac{-\beta}{\epsilon_n}$$
which implies that $u_n$ satisfies the original problem. 
The proof of (\ref{Pconcentration}) follows by (\ref{Pi}) and item {\it ii)} of Proposition \ref{prop5.6}. Therefore the proof of Theorem \ref{theorem1.1} follows.

\vspace{1cm}
\noindent \textbf{Acknowledgment.}\  

We would like to express our gratitude to Prof. Antonio Su\'arez for some discussions on this subject.
\vspace{0.3cm}

\noindent \textbf{Competing interests.}\
 The authors declare that they have no competing interests.
\vspace{0.3cm}

\noindent \textbf{Author's contribution.}\
All the author wrote, review and approve the final manuscript.
\vspace{0.3cm}


\begin{thebibliography}{99}


\bibitem{Soares1} C.O. Alves, S.H.M. Soares {\it \, On the location and profile of spyke-layer nodal solutions to nonlinear Schr\"{o}dinger equations}. Journal of Mathematical Analysis and Applications  {\bf{296}} (2004), 563 - 577.

\bibitem{Soares2} C.O. Alves, S.H.M. Soares, {\it  Nodal solutions for singularly perturbed equations with critical exponential growth}, Journal of Differential Equations  {\bf{234}} (2007), 464 - 484.

\bibitem{AlvesSouto} C.O. Alves, M. Souto, {\it Existence of least energy nodal solution for a Schr\"odinger-Poisson system in bounded domains}, to appear in Z. Angew. Math. Phys.

\bibitem{Ambrosetti1} A. Ambrosetti, A. Malchiodi, W. Ni {\it Singularly perturbed elliptic equations with symmetry: Existence of solutions concentrating on spheres, Part I}, Commun. Math. Phys., {\bf{235}} (2003), 427 -
466.

\bibitem{Ambrosetti2} A. Ambrosetti, A. Malchiodi, W. Ni {\it Singularly perturbed elliptic equations with symmetry: Existence of solutions concentrating on spheres, Part II}, Indiana Univ. Math. J., {\bf{53}}, No. 2, (2004), 297 -
329.

\bibitem{Bartsch} T. Bartsch, T. Weth, M. Willem, {\it Partial symmetry of least energy nodal solutions to some variational problems}, J. Anal. Math. {\bf{96}} (2005), 1 -
18.

\bibitem{Bonheure1} D. Bonheure, J. Di Cosmo, J.V. Schaftingen, {\it Nonlinear Schr\"{o}dinger equation with unbounded or vanishing potentials: solutions concentrating on lower dimensional spheres}, Journal of Differential Equations. {\bf{252}} (2012), 941 - 968.

\bibitem{Bonheure2} D. Bonheure, J. Di Cosmo, C. Mercuri, {\it Concentration on circles for nonlinear Schr\"odinger-Poisson systems with unbounded potentials vanishing at infinity}, Commun. Contemp. Math. {\bf{14}}, No. 2 (2012), 1 - 30.

\bibitem{Castro} A. Castro, J. Cossio, J. Neuberger {\it A sign-changing solution for a super linear Dirichlet problem}, Rocky Mountain J. of Math. \textbf{27} (4) (1997), 1041-1053.

\bibitem{Cingolani} S. Cingolani, A. Pistoia, {\it Nonexistence of single blow-up solutions for a nonlinear Schr\"odinger equation involving critical Sobolev exponent}, Z. Angew. Math. Phys. {\bf{55}} (2004), 201 -
215.

\bibitem{Daprile} T. D'Aprile, W. Juncheng, {\it On bound states concentrating on spheres for the Maxwell-Schr\"odinger equation}, SIAM J. Math. Anal. {\bf{37}} (2005), 321 -
342.

\bibitem{DelPino} M. del Pino, P. Felmer, {\it Local mountain pass for semilinear elliptic problems in unbounded domains}, Calc. Var. Partial
Differential Equations {\bf{4}} (1996), 121 -
137.

\bibitem{Floer} A. Floer, A. Weinstein, {\it  Nonspreading wave packets for the cubic Schr\"{o}dinger equations with a bounded potential}, Journal of Functional Analysis  {\bf{69}} (1986), 397 - 408.

\bibitem{Fei} M. Fei, {\it  Sign-changing multi-peak solutions for nonlinear Schr\"odinger equations with compactly supported potential}, Acta Appl. Math.  {\bf{127}} (2013), 137 - 154.

\bibitem{Ianni} I. Ianni, {\it Solutions of the Schr\"odinger-Poisson problem concentrating on spheres, Part II: existence}, Math. Models Methods Appl. Sci. {\bf{19}}, No. 6 (2009), 877 - 910.

\bibitem{Montenegro} A. Malchiodi, M. Montenegro, {\it Boundary concentration phenomena for a singularly perturbed elliptic problem}, Commun. Pure Appl. Math {\bf{55}} (2002), 1507 -
1568.

\bibitem{Pimenta} M. Pimenta, S. Soares, {\it Existence and concentration of solutions for a class of biharmonic equations}, J. Math. Anal. Appl. {\bf{390}} (2012), 274 - 289.


\bibitem{Rabinowitz} P.H. Rabinowitz, {\it  On a class of nonlinear Schr\"{o}dinger equations}, ZAMP  {\bf{43}} (1992), 270 - 291.

\bibitem{Ruiz} D. Ruiz, {\it Semiclassical states for coupled Schr\"odinger-Maxwell equations: concentation around a sphere}, Math. Models Methods Appl. Sci.  {\bf{15}} (2005), 141 - 164.

\bibitem{Sato} Y. Sato, {\it Sign-changing multi-peak solutions for nonlinear Schr\"odinger equations with critical frequency}, Comm. Pure Appl. Anal. {\bf{7}}, N.4, (2008), 883 - 903.


\bibitem{Wang} X. Wang, {\it  On concentration of positive bound states of nonlinear Schr\"{o}dinger equations}, Commum. Math. Phys.  {\bf{153}} (1993), 229 - 244.

\bibitem{Willem} M. Willem, {\it  Minimax theorems}, Birkh\"auser Boston (1996).

\end{thebibliography}
\end{document}